\theoremstyle{plain}
\newtheorem*{conjectuur*}{Conjecture}
\newtheorem{theorem}[subsection]{Theorem}
\newcommand\Thm[1]{Theorem~\ref{#1}}
\newtheorem{corollary}[subsection]{Corollary}
\newcommand\Cor[1]{Corollary~\ref{#1}}
\newtheorem{lemma}[subsection]{Lemma}
\newcommand\Lem[1]{Lemma~\ref{#1}}
\newtheorem{proposition}[subsection]{Proposition}
\newcommand\Prop[1]{Proposition~\ref{#1}}
\theoremstyle{definition}
\theoremstyle{remark}
\newtheorem{remark}[subsection]{Remark}
\newcommand{\emptyprop}{q}
\newcommand \binomial[2]{{\bigl( \begin{matrix} #1\cr#2\cr\end{matrix} \bigr)}}
\newcommand\ch{characteristic}
\newcommand \CM{Coh\-en-Mac\-au\-lay}
\renewcommand \hom [3]{\operatorname{Hom}_{#1}(#2,#3)} 
\newcommand \homo{homomorphism}
\newcommand \id{\mathfrak a}
\renewcommand\iff{if and only if}
\newcommand \inv[1]{{#1^{-1}}}
\newcommand \inverse[2]{{#1^{-1}(#2)}}
\newcommand \iso{\cong}
\newcommand \map[1]{{\newcommand{\tmpprop}{#1q}  \if\tmpprop\emptyprop \to\else \xrightarrow{{\phantom{i}{#1}\phantom{i}}}\fi}} 
\newcommand \maxim{\mathfrak m}
\newcommand \nat{\mathbb N}
\newcommand \op\operatorname
\newcommand \pol[2]{#1[#2]}
\newcommand \pow[2]{#1[[#2]]}
\newcommand \pr{\mathfrak p}
\newcommand \range [2]{#1,\dots,#2}
\newcommand \rij[2]{(#1_1,\dots,#1_{#2})}
\let\sub\subseteq
\newcommand \tensor{\otimes}
\newcommand \zet{\mathbb Z}
\newcommand \exactseq [5]{0\to{#1}\:\map{#2}\:{#3}\:\map{#4}\:{#5}\to0}
\newcommand\lc[2]{\hlc {#1}{#2}\maxim}
\newcommand\rhom[2]{\underline{\op{RHom}}(#1,#2)}
\newcommand\rlc{\underline{\op{R}\!\Gamma}_\maxim}
\newcommand\dercomp{\underline\omega_R}
\newcommand\derdual{\underline{\op D}}
\newcommand\KSker[1]{\mathfrak {K}(#1)}
\newcommand \powers[2]{ #1^{(#2)}}
\newcommand\der[2]{\op{Der}_{#1}(#2)}
\newcommand\ndomod[2]{\op{End}_{#1}(#2)}
\newcommand\can[2]{\canring{#1}{#2}{}}
\newcommand\canring[3]{\mathbf K_{#2}^{#3}(#1)}
\newcommand\matlis[1]{#1^\vee}
\newcommand\hlc[3]{\op{H}_{#3}^{#2}(#1)}
\newcommand\frob[1]{\mathbf{F}_{#1}}
\newcommand\tuple[1]{\mathbf{#1}}
\title[A differential-algebraic criterion for obtaining a small MCM]{A differential-algebraic criterion for obtaining a small maximal Cohen-Macaulay module}
\author{Hans Schoutens}
\date\today
\address{Department of Mathematics\\
NYC College of Technology and
the CUNY Graduate Center\\
New York, NY, USA}
\subjclass[2010]{13D22,13D45,13A35}
\begin{document}
\begin{abstract} 
We show how for a three-dimensional complete local ring  in positive \ch, the existence of an F-invariant, differentiable derivation   implies  Hochster's small MCM conjecture. As an application we show that any three-dimensional pseudo-graded ring in positive \ch\ satisfies Hochster's small MCM conjecture.
\end{abstract}
\maketitle


\section{Introduction}
Hochster observed that almost all of the homological conjectures over a Noetherian local ring $(R,\maxim)$ would follow readily from the existence of a  \emph{maximal \CM} module  (MCM, for short), that is to say,    a module whose depth is equal to the dimension of $R$ (for an overview, see \cite{HoCurr}).  If $R$ is complete,\footnote{Since all homological conjectures admit faithfully flat descent, there is no loss of generality in proving the existence of MCM's after taking a scalar extension (in the sense of \cite[\S3]{SchClassSing}), and so we may assume that $R$ is furthermore complete and has algebraically closed residue field. Moreover, one may always kill a  prime ideal of maximal dimension and assume in addition that $R$ is a domain.\label{f:comp}} being an MCM is equivalent with any system of parameters of $R$ becoming a regular sequence on the module. Together with Huneke, he then proved their existence in equal \ch\ (\cite{HHbigCM}, with a simplified proof in \ch\ zero using ultraproducts by the author in \cite{SchBCM}). Recent work of Andr\'e has now also settled the mixed \ch\ case (\cite{AndDS,AndMix}). Around the same time, he also asked whether in the complete case, we can even   get a \emph{small} (=finitely generated) MCM. However, for dimension three and higher, the latter remains largely an open question.\footnote{Even Hochster has now expressed doubt about the truth of this conjecture.} 

In \cite{SchMCMFsplit}, I gave a new condition (involving local cohomology) for the existence of a small MCM, and deduced the conjecture for three-dimensional F-split complete local rings. In the present paper, I will extend this to some other  three-dimensional complete local rings $(R,\maxim)$. Henceforth, we will in addition assume that $R$ is  a domain  with algebraically closed residue field $k$ (see footnote~\ref{f:comp}). I will describe the numerical invariant $h$ from the cited paper and review the argument how the existence of a small MCM follows from the vanishing of $h$ on some unmixed module (\Prop{P:candep2}). For the remainder of this introduction, we now assume that $R$   has moreover positive \ch, so that we can use Frobenius transforms. Since $h$ is invariant under Frobenius transform and is additive on direct sums, the problem reduces to finding `enough' \emph{F-decomposable} modules, that is to say, modules whose Frobenius transform is decomposable (see \Prop{P:FaddFind} below for a precise statement). For instance, the main result of \cite{SchMCMFsplit} is an instance of this principle, as F-purity means that $R$ is a direct summand of  $\frob*R$.   In \S\ref{s:diff}, I then introduce some techniques from differential algebra and deduce the main theorem: if $R$ admits a Hasse-Schmidt derivation $(1,H_1,H_2,\dots)$ with $H_1^p=H_1$, then it admits a small MCM. 

The last section is then devoted to a special type of rings for which these Hasse-Schmidt derivations always exist, and therefore satisfy Hochster's small MCM conjecture in dimension three: the class of pseudo-graded rings. These include two of the previously known cases of Hochster's conjecture:  
\begin{itemize}
\item the completion  of a three-dimensional   graded ring;\footnote{Hochster attributes this case independently to Hartshorne and Peskine-Szpiro, see \cite{HoCurr}.} 
\item an analytic toric singularity, that is to say,  the completion  of   the coordinate ring of a  point on a toric variety.\footnote{The normalization of an analytic toric singularity is   a small MCM by \cite{HoToric}; it is pseudo-graded since its ideal of definition can be generated by binomials by  \cite{EisStu}.} 
\end{itemize}
However, we can now construct new examples from these: e.g., take a hypersurface in a four-dimensional analytic toric singularity with defining equation given by a quadrinomial (=polynomial with four non-zero terms), then each irreducible component is pseudo-graded, whence admits a small MCM (\Cor{C:tor4quad}).  I have also included an appendix, in which I make the connection between differential operators and F-decomposability more explicit, which hopefully gives more credence to this particular approach to solve Hochster's small MCM conjecture in dimension three.

\section{A cohomological criterion and F-decomposability}

Throughout, fix a   complete local domain $(R,\maxim)$ of dimension $d\geq 2$,  with algebraically closed residue field $k$ (see footnote~\ref{f:comp}), and let $M$ be a finitely generated $R$-module. Let $E$ be the injective hull of $k$ and denote the Matlis dual of a module $Q$ by $\matlis Q:=\hom RQE$.
\subsection*{Local cohomology}
We use the following facts
\begin{enumerate}
\item\label{i:supplc}   $\lc Mi $ is non-zero for $i$ equal to   $\op{depth}(M)$ and $\op{dim}(M)$, and some values in between (\cite[Theorem 3.5.7]{BH});
\item each $\lc M  i$ is Artinian  (\cite[Lemma 3.5.4]{BH}), whence $\matlis{\lc M  i}$ is finitely generated, and has dimension   at most $i$ (\cite[Proposition 2.5]{SchMCMFsplit}). 
\end{enumerate}
Put $\can M{}:=\can MR:=\matlis{\lc M d}$. If $S\sub R$ is a Noether normalization (i.e., a finite extension with $S$ regular), then $\can R{}=\hom SRS$, showing that in general   $\can R{}$ is unmixed. In fact, if $R$ is \CM, then $\can R{}$ is its canonical module.
From $\lc M d\iso M\tensor \lc R d$, we get 
\begin{equation}\label{eq:toppcan}
\can M{}=\hom RM{\can R{}}
\end{equation} 
 whenever $d=\dim M$, so that the unmixedness of  $\can R{}$ implies that of  $\can M{}$. The following invariant will play an important role in the sequel:
\begin{equation}\label{eq:h}
h(M):=\ell(\lc {\matlis{\lc M{d-1}}}0)<\infty.
\end{equation}

\begin{proposition}\label{P:candep2}
Let $M$ be a $d$-dimensional module, then $\can MR$ 
 has always depth at least two, and even depth at least three whenever $h(M)=0$.
\end{proposition}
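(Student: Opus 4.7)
Plan.  I would work over a Noether normalization $S\sub R$ (complete regular local of dimension $d$) and abbreviate $(-)^{*}:=\hom S{-}S$.  Since $\can R{}=\hom SRS$, identity \eqref{eq:toppcan} together with Hom-tensor adjunction yields $\can MR = M^{*}$; set $N:=\can MR$.  Because $E_R(k)=\hom SR{E_S(k)}$, Matlis duality over $R$ coincides with that over $S$, so local duality over the regular ring $S$ gives $\matlis{\lc Xi}\iso\ext S{d-i}XS$ for every finitely generated $S$-module $X$.  Specialising to $X=M$ identifies $\matlis{\lc M{d-1}}$ with $\ext S1MS$, and consequently $h(M)=\ell(\lc{\ext S1MS}0)$.

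Next, choose a free presentation $0\to K\to F\to M\to 0$ of $M$ over $S$, with $F$ finitely generated free.  Dualising by $(-)^{*}$ produces a four-term exact sequence
\begin{equation*}
0 \to N \to F^{*} \to K^{*} \to \ext S1MS \to 0,
\end{equation*}
which I split into $0\to N\to F^{*}\to I\to 0$ and $0\to I\to K^{*}\to\ext S1MS\to 0$, where $I$ is the intermediate image.  The free module $F^{*}$ has depth $d$, so $\lc{F^{*}}i=0$ for $i<d$; and since $K$ is a nonzero submodule of a free module over the domain $S$, one has $\dim K=d$ and $K^{*}$ embeds into a free module, giving $\op{depth}(K^{*})\geq 1$.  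Chasing the long exact sequences of local cohomology then yields $\lc N0=0$ directly, and $\lc N1\iso\lc I0\into\lc{K^{*}}0=0$, so $\op{depth}(N)\geq 2$ for every $d$-dimensional $M$.

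For the second statement, assume $d\geq 3$ and $h(M)=0$.  I now re-invoke the depth-at-least-two statement just proved, this time on the $d$-dimensional module $K$: it gives $\op{depth}(K^{*})\geq 2$, whence $\lc{K^{*}}1=0$.  Combined with $\lc{F^{*}}2=0$, the same two long exact sequences now collapse to $\lc N2\iso\lc I1\iso\lc{\ext S1MS}0$, a module of length exactly $h(M)$ by the first paragraph.  Thus $\lc N2=0$, i.e.\ $\op{depth}(N)\geq 3$.  The main subtlety is this bootstrap: depth-at-least-two must first be established uniformly for all $d$-dimensional modules, and only then reapplied to the syzygy $K$, because it is precisely the vanishing of $\lc{K^{*}}1$ that isolates $\lc{\ext S1MS}0$ as the single obstruction governed by $h(M)$.
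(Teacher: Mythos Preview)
Your argument is correct and takes a genuinely different route from the paper's.  The paper never passes to a Noether normalization; instead it works intrinsically over $R$.  After killing $\lc M0$ it picks an $M$-regular element $a\in\maxim$, and the long exact sequence of local cohomology for $0\to M\to M\to M/aM\to 0$ shows that $\can M{}/a\can M{}$ embeds in $\can{M/aM}{R/aR}$, which is unmixed over the $(d-1)$-dimensional ring $R/aR$; this gives $\op{depth}\can M{}\geq 2$.  When $h(M)=0$, the module $K':=\matlis{\lc M{d-1}}$ has positive depth, so $a$ can be chosen $K'$-regular as well; extending the sequence one step to the right then upgrades the embedding to an isomorphism $\can M{}/a\can M{}\iso\can{M/aM}{R/aR}$, and the first part applied one dimension down finishes.

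Your approach trades this ``reduce modulo a regular element plus unmixedness'' argument for local duality over a regular base and a syzygy chase.  The bootstrap---proving the depth-$\geq 2$ statement uniformly and then reapplying it to the first syzygy $K$---is the exact analogue of the paper's inductive step over $R/aR$, but it buys you slightly more: you obtain the precise identification $\lc{\can MR}2\iso\lc{\ext S1MS}0$, so in fact $\ell(\lc{\can MR}2)=h(M)$, an equality the paper's proof does not directly exhibit.  Conversely, the paper's argument is more self-contained (no Noether normalization, no local duality) and makes the role of unmixedness of $\can{-}{}$ transparent.  One cosmetic point: your claim that $K^*$ ``embeds into a free module'' is true but unnecessary---over the domain $S$ any dual $\hom S{-}S$ is torsion-free, which already gives $\op{depth}(K^*)\geq 1$; and the degenerate case $K=0$ (i.e.\ $M$ free over $S$) should be mentioned, though it is trivial.
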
 
\begin{proof}
Let $D:=\lc M0\sub M$. As $\lc Dd=\lc D{d-1}=0$ by \eqref{i:supplc},   the long exact sequence of local cohomology yields $\can M{}=\can{M/D}{}$, and so upon replacing $M$ by $M/D$, we may assume that $M$ has positive depth. Choose an $M$-regular element $a\in\maxim$, and let $\bar R:=R/aR$ and $\bar M:=M/aM$. The short exact sequence $\exactseq MaM{}{\bar M}$ yields a long exact sequence
$$
\dots \to \lc{\bar M}{d-1}\to \lc Md\map a\lc Md\to \lc {\bar M}d=0 
$$ 
and hence taking Matlis duals, we get an exact seqeunce
\begin{equation}\label{eq:canMx}
0\to \can M{}\map a\can M{}\to \matlis{\lc{\bar M}{d-1}}\iso \can {\bar M}{\bar R}
\end{equation} 
where the last isomorphism follows since $\bar M$ has dimension $d-1$ over $\bar R$. Since $\can{\bar M}{\bar R}$ is unmixed and  contains $\can M{}/a\can M{}$ as a submodule,   the latter is also unmixed. As $a$ is $\can M{}$-regular, the first assertion follows. 

If $h(M)=0$, then $K':=\matlis{\lc M{d-1}}$ has also positive depth, and so we may  choose $a$ to be in addition $K'$-regular (by prime avoidance).  The exact sequence \eqref{eq:canMx} extends to
\begin{equation}\label{eq:canMxlong}
0\to \can M{}\map a\can M{}\to \can {\bar M}{\bar R}\to K'\map a K'
\end{equation} 
showing that $\can M{}/a\can M{}\iso \can{\bar M}{\bar R}$. By the first assertion, the latter has depth at least two (as an $\bar R$-module, whence as an $R$-module), and so we are done. 
\end{proof} 

From now on, we will also assume that $R$ has   \ch\ $p>0$. Let $\frob p$ denote the Frobenius map and write $\frob *M$ for the pull-back of $M$ along $\frob p$. That is to say, think of $\frob *M$ as having elements $*x$, for $x\in M$, with scalar multiplication by an element $a\in R$ given by $a{*}x:=*a^px$ (and addition as in $M$). Our assumptions on $R$  imply that $\frob *M$ is again a finitely generated $R$-module, called the \emph{Frobenius transform} of $M$. We have
\begin{enumerate}
\addtocounter{enumi}2
\item\label{i:frobfinlen} if $D$ has finite length, then $\ell (D)=\ell(\frob*D)$;
\item\label{i:frobcommmat} Frobenius transforms commute with local cohomology and Matlis duality.
\end{enumerate}
For \eqref{i:frobfinlen}, note that $k$ being algebraically closed implies $\frob*k\iso k$, and the rest now follows by induction on $\ell (D)$ and exactness of $\frob*$. The first statement of \eqref{i:frobcommmat} follows from the \v{C}ech perspective of local cohomology and the second is proven in \cite[Theorem 4.6]{SchMCMFsplit}).\footnote{
The following shorter argument using derived categories was proposed by an anonymous reviewer. 
Let $\dercomp$ be the normalized dualizing complex and write $\derdual(-) := \rhom -\dercomp$ for the Grothendieck dual, so that in particular $\rlc(\dercomp)\iso E$. Since $\frob*=\frob!$ commutes with $\rlc$, it also commutes with $\derdual(-)\iso \rhom{\rlc(-)}E$ by Grothendieck duality, and finally also with Matlis duality, since 
$$
\rlc(\derdual(-)) \iso \rhom -{\rlc(\dercomp)}\iso  \rhom -E = \matlis -.
$$
}
From this,  we get 
\begin{equation}\label{eq:hft}
h(M)=h(\frob*M).
\end{equation} 
This already yields the main result from \cite{SchMCMFsplit}: if $R$ is a three-dimensional complete F-pure ring, then it admits a small MCM. Indeed, F-purity implies that $\frob*R\iso R\oplus Q$ for some (finitely generated) $Q$, and using \eqref{eq:hft} we get $h(R)=h(\frob*R)=h(R)+h(Q)$, whence $h(Q)=0$, so that we can apply \Prop{P:candep2}.

\subsection*{F-decomposability}
We say that $M$ is \emph{F-decomposable}, if some $\frob*^nM$ is decomposable. Some examples are non-simple modules of finite length, and F-pure rings (see the previous paragraph).\footnote{In fact, I suspect that $k$ is the only F-indecomposable module---note that if this were true, then \Prop{P:FaddFind} below would  apply to the class of all unmixed modules, proving Hochster's conjecture in dimension three.} Any direct summand of some $\frob*^nM$ will be a called an \emph{F-component}. 

\begin{proposition}\label{P:FaddFind}
If there  exists a nonempty   class $\mathcal H$   of F-decomposable unmixed modules which is closed under F-components (i.e., closed under Frobenius transforms and direct summands), then there is some   $M\in\mathcal H$ with $h(M)=0$.
\end{proposition}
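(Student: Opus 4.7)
The plan is a short extremality argument on the invariant $h$. Since $\mathcal{H}$ is nonempty and $h$ takes values in $\mathbb{N}$, there exists $M \in \mathcal{H}$ that minimizes $h$; I will show this minimum must be zero.

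Before carrying this out, I need one extra property of $h$, namely that it is additive on finite direct sums:
\[
h(N_1 \oplus N_2) = h(N_1) + h(N_2).
\]
This is immediate, because each of the three operations composing the definition \eqref{eq:h}---forming $\lc -{d-1}$, Matlis dualizing, and taking the length of the $\maxim$-torsion submodule---commutes with finite direct sums. Combined with the Frobenius invariance \eqref{eq:hft}, applied $n$ times, this yields $h(\frob*^n N) = \sum_i h(N_i)$ whenever $\frob*^n N = \bigoplus_i N_i$.

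Now choose $M \in \mathcal{H}$ with $h(M)$ minimal. Since $M$ is F-decomposable, there exist an integer $n\geq 0$ and nonzero modules $M_1, M_2$ with $\frob*^n M = M_1 \oplus M_2$. By the closure of $\mathcal{H}$ under F-components (Frobenius transforms followed by direct summands), both $M_i$ lie in $\mathcal{H}$, and so by minimality $h(M_i) \geq h(M)$ for $i=1,2$. On the other hand, the preceding paragraph gives
\[
h(M) \;=\; h(\frob*^n M) \;=\; h(M_1)+h(M_2) \;\geq\; 2h(M),
\]
which forces $h(M) = 0$, as desired.

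There is no serious obstacle here: the argument is a clean combination of the given Frobenius invariance, the evident additivity of $h$ on direct sums, and the hypothesized closure of $\mathcal{H}$. The only conceptual point worth noting is that one must allow $n$ to be arbitrary in the definition of F-decomposability---the argument depends crucially on replacing $M$ by some $\frob*^n M$ before splitting, after which minimality collapses the two summands' contributions.
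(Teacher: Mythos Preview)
Your proof is correct and follows exactly the same line as the paper's: pick $M\in\mathcal H$ with $h(M)$ minimal, decompose some $\frob*^nM$, and use additivity of $h$ together with \eqref{eq:hft} and minimality to force $h(M)=0$. Your explicit justification of additivity is a welcome addition but otherwise the arguments coincide.
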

\begin{proof}
Choose $M\in\mathcal H$ with $h(M)$ minimal. By assumption, there is some $n$ such that $\frob*^nM\iso P\oplus Q$, with $P,Q\in\mathcal H$. Since $h$ is additive on direct sums, \eqref{eq:hft} yields
$$
h(P)+h(Q)=h(\frob*^nM)=h(M),
$$ 
so that by minimality, we must have $h(M)=0$.  \end{proof}  

\section{Hasse-Schmidt derivations and F-decomposability}\label{s:diff}
Recall that a derivation $D$ on $R$ is a $k$-linear map satisfying the Leibniz rule $D(ab)=aD(b)+D(a)b$, for $a,b\in R$. Repeating this rule, we get
\begin{equation}\label{eq:genLeib}
D^p(ab)=\sum_{i=0}^p \binomial pi D^i(a)D^{p-i}(b)=aD^p(b)+D^p(a)b
\end{equation} 
proving that $D^p$ is again a derivation; if $D^p=D$ then we say that $D$ is  \emph{F-invariant}.

Recall that a $k$-linear endomorphism $f$ of $M$ is said to have \emph{order at most $n$}, if, by recursion,  $[f,a]$ has order at most $n-1$, for all $a\in R$, where we declare the elements of $R$, identified with the left multiplication maps on $M$, as having order zero. Endomorphisms of finite order are then called   \emph{differential operators}. Let $f$ be a linear differential operator on $M$. This means that for each $a\in R$, there exists $d_a\in R$, such that $d_ax=[f,a](x)=(fa-af)(x)=f(ax)-af(x)$. Since the map $a\mapsto d_a$ corresponds to the restriction of $[f,-]$ on $R$, it is in fact a derivation $\delta_M(f)\in \der kR$. In other words, $\delta_M(f)=D$ means that $f(ax)=af(x)+D(a)x$, for all $a\in R$ and $x\in M$ (one says that $f$ is a \emph{$D$-skew} derivation). Let $\KSker M\sub \der {}R$ be the image of $\delta_M$ (it is called the \emph{Kodaira-Spencer kernel} of $M$ as it can be realized as a kernel between Hochschild homology \cite[\S9]{WeiHom}).  We have  
\begin{equation}\label{eq:KSsum}
\KSker{P\oplus Q}=\KSker P\cap  \KSker Q.
\end{equation} 
Indeed, let $\pi$ denote  the projection $M:=P\oplus Q\to P$. Given $D\in \KSker{M}$, choose a linear differential operator $f$ on $M$ with $\delta_M(f)=D$, and let $p\colon P\to P\colon x\mapsto \pi(f(x))$. One easily verifies  that $[p,a]=D(a)$, for $a\in R$, showing that $\delta_P(p)=D\in\KSker P$. Conversely, if $D\in \KSker P\cap  \KSker Q$, then we can find linear differential operators $f$ and $g$ on $P$ and $Q$ respectively with $\delta_P(f)=D=\delta_Q(g)$. It is now easy to see that $f\oplus g$ is a linear differential operator on $M$ with $\delta_M(f\oplus g)=D$.

\begin{proposition}\label{P:FinvDind}
If there is an F-invariant  $D\in \KSker M$, then $\frob*M$ is decomposable. 
\end{proposition}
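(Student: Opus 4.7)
The strategy is to extract an $R$-linear endomorphism of $\frob*M$ from a differential lift of $D$ and decompose $\frob*M$ into its eigenspaces. Concretely, I would pick an additive $f\colon M\to M$ with $\delta_M(f)=D$, so that $f(ax)=af(x)+D(a)x$ for all $a\in R$, $x\in M$. Since $D(a^p)=pa^{p-1}D(a)=0$ in \ch\ $p$, we get $f(a^px)=a^pf(x)$; equivalently, the rule $\phi(*x):=*f(x)$ defines an $R$-linear endomorphism $\phi$ of $\frob*M$. The F-invariance of $D$ enters by iterating \eqref{eq:genLeib}: by induction $f^n(ax)=\sum_{i=0}^n\binom{n}{i}D^i(a)f^{n-i}(x)$, and at $n=p$ the middle binomial coefficients vanish, leaving $f^p(ax)=af^p(x)+D^p(a)x=af^p(x)+D(a)x$. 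Hence $g:=f^p-f$ is $R$-linear on $M$, and on $\frob*M$ one obtains the key relation $\phi^p-\phi=\frob*g$, where $\frob*g$ denotes the Frobenius pullback of the $R$-linear endomorphism $g$.

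I would then arrange $\phi^p=\phi$ exactly by modifying $f$ by an $R$-linear endomorphism $h$ of $M$; such a modification leaves $\delta_M(f)=D$ unchanged, and the condition $(f-h)^p=f-h$ amounts to an Artin--Schreier-type equation $h^p-h=g$ for $h\in\ndomod{R}{M}$. Once achieved, the corresponding $\phi$ is annihilated by the separable polynomial $T^p-T=\prod_{c\in\mathbb{F}_p}(T-c)$, and the Lagrange-interpolation idempotents $e_c:=\prod_{c'\neq c}(\phi-c')/(c-c')$ produce the $R$-module decomposition $\frob*M=\bigoplus_{c\in\mathbb{F}_p}\ker(\phi-c)$. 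Non-triviality is forced by $D\neq0$, which is implicit in any useful hypothesis: if $\phi$ were a scalar $c\cdot\mathrm{id}$, then $f=c\cdot\mathrm{id}$ on $M$ and hence $D=\delta_M(f)=0$.

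The main obstacle is the Artin--Schreier step. Because $f$ and $h$ do not commute in general, $(f-h)^p$ is not simply $f^p-h^p$ but picks up iterated-commutator corrections encoded by the restricted-Lie-algebra $p$-th-power identity, so solving $h^p-h=g$ requires careful control over these terms. Two natural workarounds are: (a) stay inside the finite $R$-algebra $\ndomod{R}{\frob*M}$ and construct the idempotent directly, lifting it via Hensel's lemma from the residue modulo $\maxim$ once one checks that $\bar\phi$ is annihilated by the separable $T^p-T$; (b) recast the construction in the Hasse--Schmidt framework of \S\ref{s:diff}, where the higher operators $H_2,H_3,\dots$ may furnish precisely the correction needed to cancel $g$. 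Either way, the commutator gymnastics is where the real content of the proof lies.
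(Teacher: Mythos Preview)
Your proposal correctly isolates the Artin--Schreier structure and names the right obstacle, but it misses the simple device that dissolves it. The point is to solve for your $h$ inside the \emph{commutative} $R$-subalgebra $S:=R[g]\sub\ndomod RM$ generated by $g=f^p-f$. Since $g$ is a polynomial in $f$, every element of $S$ commutes with $f$; hence for any $\tau\in S$ one has $(f-\tau)^p=f^p-\tau^p$ on the nose, with no commutator corrections. Arguing by contradiction as in the paper: if $\frob*M$ is indecomposable then $\mathcal E:=\ndomod R{\frob*M}$ is hereditary strongly local with residue field $k$ (\Thm{T:indloc}), so the module-finite commutative $R$-algebra $S\sub\ndomod RM\sub\mathcal E$ is complete local with residue field $k$. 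The \'etale polynomial $T^p-T-g\in\pol ST$ then has a root $\tau\in S$ by Hensel's lemma (its reduction already has the root $\bar f\in k$, the image of $f$ in the residue field of $R[f]\sub\mathcal E$). Now $(f-\tau)^p=f^p-\tau^p=(f+g)-(\tau+g)=f-\tau$, so $(f-\tau)^{p-1}$ is an idempotent in $\mathcal E$; it is non-trivial because $f\neq\tau$ (as $\tau$ is $R$-linear while $D\neq0$) and because $f-\tau$ lies in the radical of $R[f]$ by the choice of lift, so the idempotent cannot equal $1$ either.

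Your workaround~(a) has a gap as stated: you would need the reduction of your $\phi$ modulo $\maxim$ to satisfy $T^p-T$, but $\phi^p-\phi=\frob*g$ and there is no reason for $\frob*g$ to lie in $\maxim\mathcal E$. Workaround~(b) invokes higher $H_i$ that the hypothesis does not provide. The missing idea is simply that $g$ already commutes with $f$, so the Artin--Schreier equation can be solved within the centralizer of $f$, and the anticipated ``commutator gymnastics'' never materialize.
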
 
 \begin{proof}
Suppose not, so that $\mathcal E:=\ndomod R{\frob*M}$ is hereditary strongly local with residue field $k$ by \Thm{T:indloc}. Choose a linear differential operator $f$ on $M$ with $\delta_M(f)=D$. As in \eqref{eq:genLeib}, we have
$$
f^p(ax)=\sum_{i=0}^p \binomial pi D^i(a)f^{p-i}(x)=af^p(x)+D^p(a)b\quad\text{for all $a\in R$ and $x\in M$,}
 $$
 so that $f^p$ has   order one and $\delta_M(f^p)=D^p=D$. Put $\phi:=f^p-f$. Since $\delta_M(f^p-f)=0$, we get $[\phi,a]=0$, for all $a\in R$, which means that $\phi$ is $R$-linear. Let $S$ be the subalgebra of $\ndomod RM\sub\mathcal E$ generated by $\phi$, so that $S$ is commutative, local and complete with residue field $k$ ( by \Thm{T:indloc}). The Artin-Schreier polynomial $P(T):=T^p-T-\phi\in\pol ST$ has a solution in $k$, since $P(f)=0$. Since it is an etale equation, it therefore has already a solution $\tau\in S$ (by Hensel's lemma). Hence $(f-\tau)^p=f^p-\tau^p=f+\phi-(\tau+\phi)=(f-\tau)$, so that $(f-\tau)^{p-1}$ is a (non-trivial) idempotent in $\mathcal E$, contradicting \eqref{i:idem}.  
\end{proof} 

\subsection*{Hasse-Schmidt derivations}
Recall that a \emph{Hasse-Schmidt derivation}  on $R$ is a sequence $\mathbf H$ of $k$-linear endomorphisms $H_l$, for $l\in\nat$, such that   $H_0=1$ and 
\begin{equation}\label{eq:mixLeib}
H_l(ab)=\sum_{i=0}^l H_i(a)H_{l-i}(b)\quad\text{for all $a,b\in R$ and $l\geq 0$}.
\end{equation} 
Putting $\Phi_{\mathbf H}:=\sum H_lt^l$, viewed as a $\pow kt$-linear endomorphism on $\pow Rt$, then $\Phi_{\mathbf H}$ is multiplicative and reduces to the identity modulo $t$, whence  is an automorphism of $\pow Rt$, and conversely any such automorphism induces a unique Hasse-Schmidt derivation (see, for instance, \cite[\S27]{Mats}). In particular, $H_1$ is a derivation on $R$, and more generally, $H_l$ is a differential operator of order at most $l$. If an ideal $I\sub R$ is $\mathbf H$-invariant (i.e., $H_l(I)\sub I$, for all $l$), then $\mathbf H$ induces a Hasse-Schmidt derivation on $R/I$. 
We call an arbitrary derivation $D$ \emph{integrable}, if there exists some Hasse-Schmidt derivation $\mathbf H$ with $H_1=D$.  

%

\begin{corollary}\label{C:HSass}
Given a prime ideal $\pr\sub R$, then a Hasse-Schmidt derivation   $\mathbf H$  on $R$  induces one on $R/\pr$ in the  following two cases:
\begin{enumerate}
\item\label{i:HSass} $\pr$ is an associated prime of $R$;
\item\label{i:HSsing} $\pr$ is a minimal prime of the singular (respectively, non-\CM, non-Gorenstein, non-normal) locus of $R$.
\end{enumerate}
\end{corollary}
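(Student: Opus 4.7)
The plan in both cases is to leverage the $\pow kt$-algebra automorphism $\Phi := \Phi_{\mathbf H}$ of $\pow Rt$ introduced just above, which induces the identity on $R$ after reduction modulo $t$. Since $\mathbf H$ descends to a Hasse-Schmidt derivation on $R/\pr$ exactly when $\pr$ is $\mathbf H$-invariant, and the latter condition is equivalent to $\Phi(\pr\pow Rt) = \pr\pow Rt$, the goal reduces in both parts to showing that the prime $\pr\pow Rt$ of $\pow Rt$ is fixed by $\Phi$.

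The crux is the following fixed-point lemma: if $\Sigma\subseteq\op{Spec}(\pow Rt)$ is a finite $\Phi$-stable set whose elements are all of the form $\pr'\pow Rt$ with $\pr'\in\op{Spec}(R)$, then $\Phi$ fixes each member of $\Sigma$. Indeed, contracting to $R$ sends each $\pr'\pow Rt$ to $\pr'$, giving an injection $\Sigma\hookrightarrow\op{Spec}(R)$, and composing $\Phi|_\Sigma$ with this injection equals the injection itself (because $\Phi$ reduces to the identity modulo $t$), so $\Phi|_\Sigma$ must be the identity.

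For case \eqref{i:HSass}, take $\Sigma := \op{Ass}_{\pow Rt}(\pow Rt)$. Faithful flatness of $R\to\pow Rt$, combined with the fact that $\pow{R/\pr'}t$ remains a domain whenever $R/\pr'$ is, yields the description $\Sigma = \{\pr'\pow Rt : \pr'\in\op{Ass}(R)\}$. This is a finite set and is permuted by $\Phi$, since any ring automorphism permutes associated primes, so the lemma applies.

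For case \eqref{i:HSsing}, let $J\subseteq R$ and $J'\subseteq\pow Rt$ be the radical ideals cutting out the specified locus in $R$ and the analogous locus in $\pow Rt$, respectively. Being intrinsic to the ring structure, $J'$ is $\Phi$-invariant. Take $\Sigma$ to be the set of minimal primes of $J'$ not containing $t$; this is $\Phi$-stable since $\Phi$ preserves both $J'$ and $t\pow Rt$. The main obstacle is to verify that $\Sigma$ consists precisely of the primes $\pr'\pow Rt$ with $\pr'$ a minimal prime of $V(J)$. One inclusion uses the fiber calculation $\pow Rt/\pr'\pow Rt = \pow{R/\pr'}t$, showing that $\pow Rt_{\pr'\pow Rt}$ inherits the relevant defect (non-regular, non-\CM, non-Gorenstein, non-normal) from $R_{\pr'}$; the reverse rests on a height/flatness argument forcing any minimal prime of $J'$ disjoint from $t$ to be extended from its contraction in $R$. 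Once this identification is in hand, the fixed-point lemma finishes the proof.
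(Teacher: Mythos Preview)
Your proposal is correct and follows essentially the same route as the paper: both reduce to showing $\Phi_{\mathbf H}(\pr\pow Rt)=\pr\pow Rt$, and both use that $\Phi_{\mathbf H}$ reduces to the identity modulo $t$ to conclude that an extended prime sent to another extended prime must in fact be fixed. Your ``fixed-point lemma'' is exactly the paper's argument in case~\eqref{i:HSass}, packaged abstractly.

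The only stylistic difference is in case~\eqref{i:HSsing}. The paper takes the slightly more direct path of showing that $\id\pow Rt$ (the extension of the defining ideal $\id\sub R$) \emph{is} the defining ideal of the corresponding locus in $\pow Rt$---using that $R\to\pow Rt$ is a regular homomorphism, so regularity, \CM{}ness, Gorensteinness, and normality ascend and descend along it---whence $\id\pow Rt$ is automatically $\Phi$-stable, and then invokes the argument of~\eqref{i:HSass} for the minimal primes of $\id$. You instead start from the intrinsic ideal $J'\sub\pow Rt$ and then argue that its minimal primes away from $t$ are extended; this amounts to proving $J'=\id\pow Rt$, so the ``height/flatness argument'' you allude to is precisely the ascent/descent step the paper makes explicit. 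Either way the content is the same.
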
 
\begin{proof}
We need to show that in either case $\pr$ is $\mathbf H$-invariant, which amounts to showing that  the $\pow kt$-algebra automorphism  
  $\Phi_{\mathbf H}$  of $\pow Rt$ defined by $\mathbf H=(H_i)_i$ preserves $\pr\pow Rt$. In case \eqref{i:HSass}, since $\pr\pow Rt$ is then an associated prime of $\pow Rt$, so must its image $\Phi_{\mathbf H}(\pr\pow Rt)$ be. But any associated prime of $\pow Rt$ is extended from $R$, that is to say, of the form $\mathfrak q\pow Rt$, for some associated prime $\mathfrak q$ of $R$. Since $\Phi_{\mathbf H}(a)=a+tf$, for some $f\in \pow Rt$, we see that  $a\in\pr$ implies $a\in\mathfrak q$, that is to say, $\pr\sub \mathfrak q$. Reasoning instead with the inverse of $\Phi_{\mathbf H}$, we get the other inclusion, showing that $\Phi_{\mathbf H}(\pr\pow Rt)= \pr\pow Rt$.

In case \eqref{i:HSsing}, let $\id$ be the radical ideal defining the singular (respectively, non-\CM, non-Gorenstein, non-normal) locus  of $R$, and let $J:=\Phi_{\mathbf H}(\id\pow Rt)$. Let $\mathfrak Q\sub\pow Rt$ be a prime ideal and set $\mathfrak P:=\inverse{\Phi_{\mathbf H}}{\mathfrak Q}$. If $\mathfrak Q$ does not contain $J$, then $\mathfrak P$ does not contain $\id\pow Rt$. As the latter ideal defines the singular (respectively, non-\CM, non-Gorenstein, non-normal) locus of $\pow Rt$, we see  that  $\pow Rt_{\mathfrak P}$
is regular (respectively, \CM, Gorenstein, normal), whence so is $
  \pow Rt_{\mathfrak Q}$ under the isomorphism $\Phi_{\mathbf H}$. This proves that $\id\pow Rt\sub J$. Conversely, if $\pow Rt_{\mathfrak Q}$ is regular (respectively, \CM, Gorenstein, normal), then so is $\pow Rt_{\mathfrak P}$ and hence $\mathfrak P$ does not contain $\id\pow Rt$, whence neither does $\mathfrak Q$ contain $J$, showing that $J\sub \id\pow Rt$, so that  $\Phi_{\mathbf H}$ preserves the ideal $\id\pow Rt$.
    As $\pr$ is a minimal prime of  $\id$, the result follows from \eqref{i:HSass}. 
\end{proof}

\begin{proposition}\label{P:HSFrob}
If $\mathbf H$ is a Hasse-Schmidt derivation, then $H_1\in \KSker{\frob*^nR}$, for all $n$. 
\end{proposition}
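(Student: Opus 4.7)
The plan is to construct, for each $n \geq 0$, an explicit $k$-linear differential operator $f_n$ on $\frob*^n R$ whose Kodaira--Spencer image $\delta_{\frob*^n R}(f_n)$ equals $H_1$. Writing $*^n x$ for the element of $\frob*^n R$ corresponding to $x \in R$, so that scalar multiplication reads $a\cdot *^n x = *^n a^{p^n} x$, the natural candidate built directly from the Hasse--Schmidt family is
\[
f_n(*^n x) := *^n H_{p^n}(x).
\]

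The crucial input is a characteristic-$p$ identity: for every $a \in R$,
\[
H_m(a^{p^n}) = 0 \text{ for } 0 < m < p^n, \qquad H_{p^n}(a^{p^n}) = H_1(a)^{p^n}.
\]
I would deduce this from the $\pow kt$-algebra automorphism $\Phi_{\mathbf H} = \sum_l H_l t^l$ of $\pow Rt$. Multiplicativity gives $\Phi_{\mathbf H}(a^{p^n}) = \Phi_{\mathbf H}(a)^{p^n}$, and since the Frobenius is additive in characteristic $p$,
\[
\Phi_{\mathbf H}(a^{p^n}) = \Bigl(\sum_l H_l(a)\, t^l\Bigr)^{p^n} = \sum_l H_l(a)^{p^n}\, t^{l p^n};
\]
comparing coefficients of $t^m$ yields the identity.

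Combined with the Leibniz rule \eqref{eq:mixLeib}, only the $i=0$ and $i=p^n$ terms survive in $H_{p^n}(a^{p^n} x) = \sum_{i=0}^{p^n} H_i(a^{p^n}) H_{p^n-i}(x)$, producing
\[
H_{p^n}(a^{p^n} x) = a^{p^n} H_{p^n}(x) + H_1(a)^{p^n}\, x.
\]
Translated via the scalar multiplication on $\frob*^n R$, this reads
\[
f_n(a \cdot *^n x) = a \cdot f_n(*^n x) + H_1(a) \cdot *^n x,
\]
so $[f_n, a]$ is scalar multiplication by $H_1(a) \in R$, an operator of order zero. Hence $f_n$ is a differential operator of order at most one on $\frob*^n R$ with $\delta_{\frob*^n R}(f_n) = H_1$, witnessing $H_1 \in \KSker{\frob*^n R}$. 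The $k$-linearity of $f_n$ as an endomorphism of $\frob*^n R$ follows from the same formula applied to $c \in k$: since $H_1(c) = 0$, the skew term drops and $f_n(c \cdot *^n x) = c \cdot f_n(*^n x)$.

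The only substantive obstacle is establishing the intermediate vanishing $H_m(a^{p^n}) = 0$ for $0 < m < p^n$; the power-series automorphism perspective handles this uniformly and essentially without calculation, after which the rest is direct verification.
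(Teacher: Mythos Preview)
Your proof is correct and follows essentially the same approach as the paper: both define the linear differential operator on $\frob*^nR$ via $*^nx\mapsto *^nH_{p^n}(x)$ and verify the skew-derivation identity by exploiting multiplicativity of $\Phi_{\mathbf H}$ together with additivity of Frobenius in characteristic $p$. The only cosmetic difference is that the paper first reduces to $n=1$ (by replacing $p$ with $p^n$) and reads off the coefficient of $t^p$ in $\Phi(a)^p\Phi(b)$ directly, whereas you work with general $n$ and first isolate the identity $H_m(a^{p^n})=0$ for $0<m<p^n$ before invoking the Leibniz rule---but the underlying computation is the same.
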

\begin{proof}
Replacing $p$ by some power, we easily reduce to the case that $n=1$, and so we are done once we show that $f:=\frob*(H_p)$ is a linear differential operator on $\frob*R$ with $\delta(f)=H_1$. For $a\in R$ and $x:=*b\in \frob*R$, we have
$
f(ax)=f(*a^pb)=*H_p(a^pb).
$
To calculate $H_p(a^pb)$, we need, with $\Phi$ the automorphism given by $\mathbf H$,  the coefficient of $t^p$ in  
$$
\Phi(a^pb)=\Phi(a)^p\Phi(b)=(a+H_1(a)t+\dots)^p(b+H_1(b)t+\dots+H_p(b)t^p+\dots)
$$
which is $a^pH_p(b)+H_1(a)^pb$, so that
$$
f(ax)=*a^pH_p(b)+*H_1(a)^pb=a{*}H_p(b)+H_1(a){*}b=af(x)+H_1(a)x
$$
showing that $\delta(f)=H_1$. 
\end{proof}

 \begin{theorem}\label{T:main}
If a three-dimensional complete local domain of   \ch\ $p$ admits an F-invariant, integrable derivation, then it has a small MCM.
\end{theorem}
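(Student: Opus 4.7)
The plan is to assemble the machinery already in place: use the given derivation to manufacture a class $\mathcal H$ of F-decomposable unmixed modules, apply \Prop{P:FaddFind} to locate some $M \in \mathcal H$ with $h(M)=0$, and then invoke \Prop{P:candep2} to conclude that $\can M{}$ is a small MCM.

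First, I would fix an integrable F-invariant derivation $D$ together with a Hasse-Schmidt derivation $\mathbf H = (1,D,H_2,\dots)$ on $R$ with $H_1 = D$. Define $\mathcal H$ to be the class of all nonzero direct summands of the iterated Frobenius transforms $\frob*^n R$, for $n\geq 0$. This class is non-empty (containing $R=\frob*^0 R$) and manifestly closed under F-components: any summand of $\frob*^n R$ has Frobenius transform that is a summand of $\frob*^{n+1}R$, and any summand of a summand of $\frob*^n R$ is again a summand of $\frob*^n R$. The key structural point is that every $M \in \mathcal H$ is unmixed of dimension three: since $R$ is a domain and Frobenius is injective, $\op{Ann}_R(*x)=0$ for every nonzero $*x\in \frob*^n R$, so $\op{Ass}_R(\frob*^n R)=\{(0)\}$, and any nonzero summand inherits this single associated prime and hence is unmixed of dimension $\dim R = 3$.

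Second, I would establish F-decomposability of every $M \in \mathcal H$ via the Kodaira-Spencer kernel. Given a decomposition $\frob*^n R \iso M \oplus N$, \Prop{P:HSFrob} places $D = H_1$ in $\KSker{\frob*^n R}$, and formula \eqref{eq:KSsum} then forces $D \in \KSker M$. Because $D$ is F-invariant, \Prop{P:FinvDind} produces a non-trivial decomposition of $\frob*M$, so $M$ is F-decomposable, as required.

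Finally, \Prop{P:FaddFind} applied to $\mathcal H$ yields some $M \in \mathcal H$ with $h(M)=0$; then \Prop{P:candep2} shows $\can M{}$ has depth at least three. Since $\can M{}$ is finitely generated over the three-dimensional ring $R$, its depth must equal three, so $\can M{}$ is a small MCM for $R$. The substantive difficulty is already packaged in \Prop{P:FinvDind} (the Artin-Schreier/idempotent argument) and in the cohomological criterion \Prop{P:candep2}; here the only point requiring care is verifying unmixedness and closure of $\mathcal H$ so that \Prop{P:FaddFind} can be invoked.
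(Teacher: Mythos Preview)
Your proposal is correct and follows essentially the same route as the paper: take $\mathcal H$ to be the F-components of $R$, use \Prop{P:HSFrob} together with \eqref{eq:KSsum} to put $D$ into $\KSker M$ for each $M\in\mathcal H$, apply \Prop{P:FinvDind} to get F-decomposability, and finish with \Prop{P:FaddFind} and \Prop{P:candep2}. You are in fact slightly more careful than the paper, spelling out unmixedness of the F-components and the closure of $\mathcal H$, and correctly noting that \Prop{P:FinvDind} only gives decomposability of $\frob*M$ (hence F-decomposability of $M$), whereas the paper's wording is a touch loose on this point.
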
 
 \begin{proof}
 By assumption, there exists a Hasse-Schmidt derivation $\mathbf H$  with $H_1=H_1^p$.  Let $\mathcal H$ be the collection of all F-components of $R$ (i.e., all summands of   $\frob*^nR$, for all $n$).   If we want to apply \Prop{P:FaddFind} to $\mathcal H$, so that we get a small MCM in view of \Prop{P:candep2}, then    we must show that any $Q\in\mathcal H$ is F-decomposable. By assumption, $Q$ is a summand of some $\frob*^nR$. By \Prop{P:HSFrob}, we have $H_1\in\KSker{\frob*^nR}$ and hence $H_1\in \KSker Q$ by \eqref{eq:KSsum}, so that $Q$ is decomposable by \Prop{P:FinvDind}.
\end{proof}

 \subsection*{$\op S_2$-ification and pseudo-perfect modules}
 Since $\can R{}$ is indecomposable (as $R$ is a domain), its endomorphism ring 
 $S:= \ndomod R{\can R{}}$ is a local ring (by \Thm{T:indloc}) satisfying Serre's condition $(\op S_2)$, called the \emph{$\op S_2$-ification} of $R$ 
 (see, for instance \cite[Theorem 3.2]{AoyCan} or  \cite{HHIndCan}).  Since $S$ is a finite $R$-module (contained in the field of fractions of $R$), any small MCM over $S$, is then also   a MCM over $R$. In short, if we want to do so, we may moreover assume that $R$ is an  \emph{$\op S_2$-domain}, i.e., satisfies Serre's condition $(\op S_2)$. 

Identifying $\can{\can M{}}{}$ with $ \hom R{\hom R M{\can R{}}}{\can R{}}$ via  \eqref{eq:toppcan}, for a module $M$, we have a canonical map $M\to \can{\can M{}}{}$, given by sending $x\in M$ to the \homo\  $\hom R M{\can R{}}\to \can R{}\colon \varphi\mapsto \varphi(x)$. If this map is an isomorphism, then we will call $M$ \emph{pseudo-perfect}.  
In view of \eqref{eq:toppcan}, we therefore showed that a complete local domain satisfies property $(\op S_2)$ \iff\ it is itself pseudo-perfect. It is not hard to show using \eqref{i:frobcommmat}, that any F-component of a pseudo-perfect module is again pseudo-perfect. Therefore, analyzing the above proofs,  we actually showed

\begin{corollary}\label{C:main}
If a three-dimensional complete local $\op S_2$-domain $R$   admits an F-invariant, integrable derivation, then some F-component of $R$ is a small MCM.\qed
\end{corollary}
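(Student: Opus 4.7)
The plan is to revisit the proofs of Theorem~\ref{T:main} and Proposition~\ref{P:candep2} and exploit the pseudo-perfect hypothesis to upgrade the small MCM produced there, namely $\can M{}$, to the F-component $M$ itself. Concretely, I would apply Proposition~\ref{P:FaddFind} to the class $\mathcal H$ of all F-components of $R$ to obtain $M\in\mathcal H$ with $h(M)=0$, and then apply Proposition~\ref{P:candep2} twice in succession, collapsing the resulting $\can{\can M{}}{}$ onto $M$ via pseudo-perfectness.

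The first step is to verify the stated claim that every F-component of $R$ is pseudo-perfect. Since $R$ is an $\op S_2$-domain, $R$ itself is already pseudo-perfect. By \eqref{i:frobcommmat}, the Frobenius pull-back commutes with both $\matlis{-}$ and $\lc{-}d$, and hence with both $\can{-}{}$ and $\can{\can{-}{}}{}$; combined with the fact that these double-dual functors distribute over direct sums, this shows that pseudo-perfectness is preserved under Frobenius transforms and under direct summands. Inductively, any summand of any $\frob*^n R$ is pseudo-perfect.

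For the main step, I would follow the proof of Theorem~\ref{T:main} to obtain an F-component $M$ of dimension $d=3$ with $h(M)=0$. Proposition~\ref{P:candep2} then yields $\op{depth}\can M{}\geq 3$, so $\can M{}$ is a three-dimensional MCM; in particular its intermediate local cohomology $\lc{\can M{}}{d-1}$ vanishes, and therefore $h(\can M{})=0$ holds tautologically. Applying Proposition~\ref{P:candep2} a second time with $\can M{}$ now playing the role of $M$, I obtain $\op{depth}\can{\can M{}}{}\geq 3$. But $M$ is pseudo-perfect by the previous step, so the canonical evaluation $M\to\can{\can M{}}{}$ is an isomorphism, making $M$ itself a small MCM, and by construction an F-component of $R$.

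The only substantive point is the permanence of pseudo-perfectness under F-components, though this reduces to a short formal diagram chase once \eqref{i:frobcommmat} is in hand. One minor subtlety worth spelling out is that Proposition~\ref{P:candep2} requires its input module to be $d$-dimensional, which is automatic for a nonzero summand of $\frob*^n R$ since $\frob*^n R$ is torsion-free over the domain $R$.
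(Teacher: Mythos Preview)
Your proposal is correct and matches the paper's intended argument: the paper itself only says ``analyzing the above proofs, we actually showed'' after recording that $R$ is pseudo-perfect and that pseudo-perfectness passes to F-components, leaving the details implicit. Your way of filling these in---running the proof of Theorem~\ref{T:main} to get an F-component $M$ with $h(M)=0$, then applying Proposition~\ref{P:candep2} twice and collapsing $\can{\can M{}}{}$ back to $M$ via pseudo-perfectness---is exactly the natural reading of that sketch.
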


\section{Application: pseudo-graded rings}
 Throughout this section, let $S:=\pow k{x_1,\dots,x_r}$   and let $A:=\pol S{\inv {x_1},\dots,\inv {x_r}}$ be the \emph{ring of Laurent series}. 
Any element $f\in A$ can be written as $f=\sum_{\tuple a} u_{\tuple a} x^{\tuple a}$,   with $u_{\tuple a}\in k$ and   ${\tuple a}\in\zet^r$ (with the usual convention that $x^{\tuple a}:=x_1^{a_1}\cdots x_r^{a_r}$), so that in addition, its \emph{support} $\op{supp}(f)$, that is to say, the set of all ${\tuple a}\in\zet^r$ such that $u_{\tuple a}\neq0$,  is contained in some translate $ \tuple c+\nat^r$, for some $\tuple c\in\zet^r$.

Fix a non-zero linear form $\lambda\in\hom{}{\zet^r}\zet$ and let $Z(\lambda)\subset \zet^r$ be its kernel. This form is represented by an $r$-tuple $\tuple l:=\rij lr$, so that $\lambda\rij ar=l_1a_1+\dots+l_ra_r$ for any $\rij ar\in\zet^r$. Put differently, if $\tuple e_i$ is the $i$-th standard basis element of $\zet^r$, then $l_i=\lambda(\tuple e_i)$. 
Let us say that an element $f\in A$ is \emph{$\lambda$-homogeneous}, if $\lambda$ is constant on its support $\op{supp}(f)$, or, equivalently, if $\op{supp}(f)\sub {\tuple a}+ Z(\lambda)$, for some ${\tuple a}\in \zet^r$, called the \emph{weight vector} of $f$. An ideal $I\sub S$ will be called a \emph{$\lambda$-ideal}, if it is generated by $\lambda$-homogeneous elements. Any complete local ring $R$ that can be realized as a quotient  $R:=S/I$ with $I$ a $\lambda$-ideal for some non-zero linear form $\lambda$, will be called  a \emph{pseudo-graded ring}. An example is the  completion of a standard graded ring at its irrelevant maximal ideal (with all $l_i=1$). To $\lambda$, we also associate the  derivation 
$$
\Delta_\lambda:=l_1x_1\partial_1+\dots+l_rx_r\partial_r
$$
where $\partial_i:=\partial/\partial x_i$ is the  $i$-th partial derivative on $S$, whence on $A$.  A quick calculation yields 
\begin{equation}\label{eq:dLmon}
\Delta_\lambda(x^{\tuple a})=\lambda({\tuple a})x^{\tuple a}
\end{equation}
for all ${\tuple a}\in \zet^r$. In particular, $\Delta_\lambda(f)=\lambda({\tuple a})f$, for any   $\lambda$-homogeneous element $f$ with weight vector ${\tuple a}$, showing that any $\lambda$-ideal is invariant under $\Delta_\lambda$. In particular, $\Delta_\lambda$ induces a derivation on the pseudo-graded quotient $R$. A $p$-fold iteration of \eqref{eq:dLmon} then yields 
$$
\Delta_\lambda^p(x^{\tuple a})=\lambda({\tuple a})^px^{\tuple a}=\lambda({\tuple a})x^{\tuple a}=\Delta_\lambda(x^{\tuple a})
$$
and since this holds for any monomial, we get $\Delta_\lambda^p=\Delta_\lambda$, that is to say, $\Delta_\lambda$ is F-invariant.

Our next goal is to show that $\Delta_\lambda$ is integrable, and to this end we will use generalized binomials. For $m,d\in\nat$, we have the so-called   \emph{negation rule} 
$$
\binomial{-m}d:=(-1)^d\binomial{m+d-1}d,
$$
and the usual additive rule (Pascal identity)
\begin{equation}\label{eq:addgenbin}
\binomial{z}d= \binomial{z-1}d+\binomial {z-1}{d-1}\qquad\text{for all $z\in\zet$ and $d>0$.}
\end{equation}

\begin{proposition}\label{P:binomintdeltaid}
There exists a Hasse-Schmidt derivation $\mathbf H_\lambda=(1,\Delta_\lambda,H_2,\dots)$ on $S$, which leaves every $\lambda$-ideal invariant. In particular, any pseudo-graded local ring admits an F-invariant, integrable derivation. 
\end{proposition}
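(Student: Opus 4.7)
My plan is to integrate $\Delta_\lambda$ explicitly by a one-parameter family of diagonal dilations of the variables. I would define a $\pow kt$-algebra endomorphism $\Phi_\lambda$ of $\pow St$ by
$$
\Phi_\lambda(x_i):=x_i(1+t)^{l_i}=\sum_{d\geq 0}\binomial{l_i}d x_i\, t^d,
$$
interpreting $\binomial{l_i}d$ via the negation rule when $l_i<0$. Since $\Phi_\lambda(x_i)\equiv x_i\pmod t$, extending continuously in the $(x_1,\dots,x_r)$-adic topology sends $\sum u_{\tuple a}x^{\tuple a}$ to $\sum u_{\tuple a}x^{\tuple a}(1+t)^{\lambda({\tuple a})}$, whose coefficient of $t^d$ is the legitimate power series $\sum u_{\tuple a}\binomial{\lambda({\tuple a})}d x^{\tuple a}\in S$. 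Multiplicativity is immediate from $\Phi_\lambda(x^{\tuple a})=x^{\tuple a}(1+t)^{\lambda({\tuple a})}$ together with additivity of $\lambda$, and being the identity modulo $t$, $\Phi_\lambda$ is automatically a $\pow kt$-automorphism.

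Next, I would read off the Hasse-Schmidt derivation $\mathbf H_\lambda=(H_l)_l$ from the expansion $\Phi_\lambda=\sum_l H_lt^l$. The monomial formula $H_l(x^{\tuple a})=\binomial{\lambda({\tuple a})}l x^{\tuple a}$ specializes at $l=1$ to $H_1(x^{\tuple a})=\lambda({\tuple a})x^{\tuple a}=\Delta_\lambda(x^{\tuple a})$ by \eqref{eq:dLmon}, so $H_1=\Delta_\lambda$ as demanded. For the invariance of $\lambda$-ideals, I observe that if $f\in S$ is $\lambda$-homogeneous with weight vector $\tuple a$, then $\Phi_\lambda(f)=f(1+t)^{\lambda({\tuple a})}\in f\pow St$, so for any $\lambda$-ideal $I\sub S$ one has $\Phi_\lambda(I\pow St)\sub I\pow St$. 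Extracting the coefficient of $t^l$ gives $H_l(I)\sub I$ for every $l$, so $\mathbf H_\lambda$ descends to any pseudo-graded quotient $R=S/I$ as a Hasse-Schmidt derivation whose first term is the induced derivation $\Delta_\lambda$, already shown to be F-invariant; this yields the ``in particular'' clause.

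The only genuinely delicate point is the well-definedness of $\Phi_\lambda$ as a continuous $\pow kt$-algebra automorphism when some of the $l_i$ are negative---that is where the generalized binomial machinery (ultimately via a Vandermonde-type identity, derivable from \eqref{eq:addgenbin}) plays an essential role, both in making sense of $(1+t)^{l_i}$ and in verifying that the induced coefficients $H_l$ satisfy the mixed Leibniz rule \eqref{eq:mixLeib}. Everything else is bookkeeping with the multiplicative structure of $(1+t)^c$ in $\pow kt$ and unwinding of definitions.
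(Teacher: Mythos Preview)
Your argument is correct and arrives at exactly the same Hasse--Schmidt derivation as the paper: both yield $H_n(x^{\tuple a})=\binom{\lambda(\tuple a)}{n}x^{\tuple a}$. The difference is purely in packaging. The paper writes down the $H_n$ directly on the Laurent series ring and then \emph{verifies} the mixed Leibniz rule \eqref{eq:mixLeib} by an explicit appeal to the Chu--Vandermonde identity; invariance of $\lambda$-ideals is obtained by first proving that each $H_n$ is $A_\lambda$-linear (where $A_\lambda$ is the subring of Laurent series supported on $Z(\lambda)$) and then decomposing a $\lambda$-homogeneous element as a monomial times an element of $A_\lambda$. You instead start from the automorphism side of the Hasse--Schmidt correspondence, defining $\Phi_\lambda$ as the $\pow kt$-algebra map $x_i\mapsto x_i(1+t)^{l_i}$, so that multiplicativity---and hence the Leibniz rule---comes for free, and the identity $\Phi_\lambda(f)=f(1+t)^{\lambda(\tuple a)}$ for $\lambda$-homogeneous $f$ gives invariance of $\lambda$-ideals in one line. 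Your route is a bit more conceptual and sidesteps the explicit combinatorial identity (which is, of course, hiding in the equality $(1+t)^{\lambda(\tuple a)}(1+t)^{\lambda(\tuple b)}=(1+t)^{\lambda(\tuple a+\tuple b)}$); the paper's route has the minor advantage of making the operators $H_n$ and their action on $\lambda$-homogeneous elements completely explicit from the outset. Your worry about well-definedness when some $l_i<0$ is not a real obstacle: $(1+t)^{l_i}$ is simply a unit in $\pow kt$, and since $x_i(1+t)^{l_i}\in (x_1,\dots,x_r)\pow St$, the assignment extends to a continuous $\pow kt$-algebra endomorphism of $\pow St\iso\pow{\pow kt}{x_1,\dots,x_r}$ by the universal property of formal power series rings.
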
 
\begin{proof}
We will be more precise and show that  the $k$-linear maps given by 
\begin{equation}\label{eq:Hbinomn}
H_n(\sum u_{\tuple a} x^{\tuple a}):=\sum_{\tuple a} u_{\tuple a}\binomial{\lambda({\tuple a})}nx^{\tuple a}
\end{equation} 
for  $\sum_{\tuple a} u_{\tuple a} x^{\tuple a}\in A$, yield  a Hasse-Schmidt derivation 
 on the ring of Laurent series $A$. 
Note that   $H_1=\Delta_\lambda$ does satisfy \eqref{eq:Hbinomn} in view of   \eqref{eq:dLmon}. 
To verify \eqref{eq:mixLeib}, let $f=\sum u_{\tuple a} x^{\tuple a}$ and $g=\sum_{\tuple b} v_{\tuple b} x^{\tuple b}$ be in $A$. We get
$$
\begin{aligned}
H_n(fg)&=H_n(\sum_{{\tuple a},{\tuple b}}u_{\tuple a} v_{\tuple b} x^{{\tuple a}+{\tuple b}})=\sum_{{\tuple a},{\tuple b}}u_{\tuple a} v_{\tuple b} H_n(x^{{\tuple a}+{\tuple b}})\\
&= \sum_{{\tuple a},{\tuple b}}u_{\tuple a} v_{\tuple b} \binomial{\lambda({\tuple a}+{\tuple b})}nx^{{\tuple a}+{\tuple b}}\overset{(\textsl{CH})}=\sum_{i,{\tuple a},{\tuple b}}  u_{\tuple a} v_{\tuple b} \binomial{\lambda({\tuple a})}i\binomial{\lambda({\tuple b})}{n-i}  x^{\tuple a} x^{\tuple b}\\
&=\sum_{i,{\tuple a},{\tuple b}}  u_{\tuple a} v_{\tuple b} H_i(x^{\tuple a})H_{n-i}(x^{\tuple b}) =\sum_i H_i(\sum u_{\tuple a} x^{\tuple a})H_{n-i}(\sum_{\tuple b} v_{\tuple b} x^{\tuple b})\\
&=\sum_{i=0}^n H_i(f)H_{n-i}(g)
\end{aligned}
$$
%
%
%
where   we used the Chu-Vandermonde identity for binomial coefficients
$$
\binomial{v+w}n=\sum_{i=0}^n \binomial{v}i\binomial{w}{n-i}\qquad\qquad \thetag {\textsl{CH}}
$$
for $v,w\in\zet$, since  
$\lambda({\tuple a}+{\tuple b})=\lambda({\tuple a})+\lambda({\tuple b})$.

It  follows from \eqref{eq:Hbinomn} that $\mathbf H_\lambda$ leaves $S$ invariant, so that it is a Hasse-Schmidt derivation on $S$. 
To show that $\mathbf H_\lambda$ leaves any $\lambda$-ideal of $S$ invariant, let $A_\lambda\sub A$ be the subring of all Laurent series whose support lies in the kernel $Z(f)$ of $\lambda$. It follows from \eqref{eq:Hbinomn} that each $H_n$ is identically zero on $A_\lambda$. Therefore, each $H_n$ is $A_\lambda$-linear, since for $f\in A$ and $g\in A_\lambda$, we have $H_n(fg)=fH_n(g)+H_1(f)H_{n-1}(g)+\dots+H_n(f)g$ in which all but the last term are zero, so that $H_n(fg)=gH_n(f)$.  
%
%
%
%
Let $f\in S$ be $\lambda$-homogeneous. By definition, it is of the form  $f=x^{\tuple a} g$, with $g\in A_\lambda$. Hence, using \eqref{eq:dLmon}, we get 
$$
H_n(f)=H_n(x^{\tuple a} g)= gH_n(x^{\tuple a})=g\binomial{\lambda({\tuple a})}nx^{\tuple a}  =\binomial{\lambda({\tuple a})}nf.
$$
Hence  any $\lambda$-ideal is invariant under $H_n$, for all $n$.  
To prove the last assertion, let $R$ be pseudo-graded, say,  $R=S/I$ for some linear form $\lambda\colon\zet^r\to \zet$ and some $\lambda$-ideal $I\sub S$. Since $I$ is  invariant under $\mathbf H_\lambda$, the latter induces a Hasse-Schmidt derivation on $R$.
\end{proof} 

By \Cor{C:HSass}, any component of a pseudo-graded ring, i.e., any quotient  by an associated prime, also admits an F-invariant integrable derivation and  \Thm{T:main} yields:

\begin{theorem}\label{T:psgr3}
Any three-dimensional component of a pseudo-graded local ring in positive \ch\ admits a small MCM.\qed
\end{theorem}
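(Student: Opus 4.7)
The plan is to reduce this to a direct application of \Thm{T:main} by chasing the Hasse-Schmidt derivation constructed in \Prop{P:binomintdeltaid} down to a component.

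First, let $R=S/I$ be a pseudo-graded ring, so that $I\sub S=\pow k{x_1,\dots,x_r}$ is a $\lambda$-ideal for some non-zero linear form $\lambda\colon\zet^r\to\zet$. By \Prop{P:binomintdeltaid}, the Hasse-Schmidt derivation $\mathbf H_\lambda=(1,\Delta_\lambda,H_2,\dots)$ leaves $I$ invariant, whence descends to a Hasse-Schmidt derivation $\bar{\mathbf H}_\lambda$ on $R$ whose first component is the derivation induced by $\Delta_\lambda$. Since $\Delta_\lambda^p=\Delta_\lambda$ on $S$, the same identity holds on the quotient, so this induced derivation is F-invariant.

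Next, let $\pr\sub R$ be an associated prime of $R$ such that $\dim(R/\pr)=3$, so that $R/\pr$ is the type of component described in the statement. By  \Cor{C:HSass}\eqref{i:HSass}, the ideal $\pr$ is $\bar{\mathbf H}_\lambda$-invariant, so $\bar{\mathbf H}_\lambda$ induces in turn a Hasse-Schmidt derivation on $R/\pr$ whose first component is again F-invariant (for the same reason as above: the identity $H_1^p=H_1$ descends through any invariant quotient). Thus $R/\pr$ is a three-dimensional complete local domain with algebraically closed residue field $k$ (inherited from $S$, see footnote~\ref{f:comp}) that carries an F-invariant, integrable derivation.

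Applying \Thm{T:main} to $R/\pr$ gives a small MCM, as desired. The whole argument is essentially bookkeeping once the ingredients are in place; the only point to check carefully is that the F-invariance property $H_1^p=H_1$ truly passes to $R$ and then to $R/\pr$, but this is automatic since in both cases $H_1$ descends through an invariant ideal and the relation is a pointwise equality of additive maps.
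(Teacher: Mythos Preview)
Your proof is correct and follows essentially the same route as the paper: use \Prop{P:binomintdeltaid} to obtain the F-invariant integrable derivation on the pseudo-graded ring, invoke \Cor{C:HSass}\eqref{i:HSass} to pass it to a component, and conclude via \Thm{T:main}. The paper compresses this into a single sentence preceding the theorem, while you spell out the bookkeeping (descent of $H_1^p=H_1$, domain and completeness of $R/\pr$) more explicitly, but there is no substantive difference.
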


\subsection*{Examples of pseudo-graded rings}
To an element $f$  in $S$ or in $\pol kx$, we associate its lattice  $\Lambda(f)\sub\zet^r$, as the subgroup generated by all differences of elements in $\op{supp}(f)$ (with the convention that $\Lambda(f)=0$ when $f$ is a monomial).  We have inclusions
\begin{equation}\label{eq:Lfg}
\Lambda(fg), \Lambda(f+g)\sub \Lambda(f)+\Lambda(g),
\end{equation} 
 for all $f,g$. 
The \emph{rank} $\op{rk}(f)$ of $f$ is defined to be the rank of  $\Lambda(f)$, that is to say, the vector space dimension of $\Lambda(f)\tensor \mathbb Q$; in particular, $\op{rk}(f)\leq r$. A \emph{binomial} is an element whose support consists of two elements, and so binomials have rank one.  We may extend the above to  ideals $I$ in  $S$ or $\pol kx$: define $\Lambda(I):=\Lambda(f_1)+\dots+\Lambda(f_s)$ for $I=\rij fs$. By \eqref{eq:Lfg}, this does not depend on the choice of generators, and we have in particular that  $\Lambda(I+J)=\Lambda(I)+\Lambda(J)$ for any two ideals $I,J$. We then define $\op{rk}(I)$ as the rank of $\Lambda(I)$.   More generally, if $R$ is the quotient of $S$ or $\pol kx$ by some ideal, then we define the rank of an ideal $I\sub R$ as the smallest rank of a lifting of $I$ to $S$ or $\pol kx$. 

\begin{lemma}\label{L:rkpg}
If $R=S/I$ with $\op{rk}(I)<r$, then $R$ is pseudo-graded. 
\end{lemma}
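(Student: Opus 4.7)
The plan is to use the hypothesis $\op{rk}(I)<r$ to produce a non-zero linear form $\lambda$ that vanishes on the lattice $\Lambda(I)$, and to verify that this forces every element of a suitable generating set of $I$ to be $\lambda$-homogeneous.

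More precisely, I would first fix generators $f_1,\dots,f_s$ of $I$, so that by definition $\Lambda(I)=\Lambda(f_1)+\dots+\Lambda(f_s)$. Since $\op{rk}(I)<r$, the $\zet$-submodule $\Lambda(I)\sub\zet^r$ has rank strictly less than $r$, hence the quotient $\zet^r/\Lambda(I)$ has positive free rank. I would then invoke the structure theorem for finitely generated abelian groups (or just linear algebra over $\mathbb Q$) to produce a non-zero group homomorphism $\lambda\colon\zet^r\to\zet$ which is trivial on $\Lambda(I)$.

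Next I would argue that every $f_i$ is $\lambda$-homogeneous. Since $\lambda$ vanishes on $\Lambda(f_i)\sub\Lambda(I)$, and $\Lambda(f_i)$ is generated by all differences $\tuple a-\tuple b$ with $\tuple a,\tuple b\in\op{supp}(f_i)$, the linear form $\lambda$ must be constant on $\op{supp}(f_i)$. Thus $\op{supp}(f_i)\sub \tuple a_i+Z(\lambda)$ for any chosen $\tuple a_i\in\op{supp}(f_i)$, which is exactly the definition of $\lambda$-homogeneity. Consequently $I=(f_1,\dots,f_s)$ is a $\lambda$-ideal, and $R=S/I$ is pseudo-graded by definition.

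The argument is essentially formal; the only mild point to check is the existence of the integral linear form $\lambda$, which uses that $\Lambda(I)$ has rank $<r$ inside the free module $\zet^r$ (so that one can choose a complementary direction on which a $\zet$-valued functional is non-trivial). Infinite supports in $S$ cause no difficulty, since the definition of $\Lambda(f)$ and of $\lambda$-homogeneity both make sense verbatim for formal power series once $\lambda$ is fixed.
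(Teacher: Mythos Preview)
Your proof is correct and follows the same route as the paper: the paper simply asserts that since $\op{rk}(I)<r$ there exists a nonzero tuple $(l_1,\dots,l_r)\in\zet^r$ orthogonal to $\Lambda(I)$, and that $I$ is then a $\lambda$-ideal for the associated linear form. Your version spells out both steps (existence of $\lambda$ via the structure of $\zet^r/\Lambda(I)$, and $\lambda$-homogeneity of each generator) in more detail, but the argument is the same.
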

\begin{proof}
By assumption, there is some $\rij lr\in\zet ^r$ which is orthogonal to $\Lambda(I)$, and it is now easy to see that $I$ is a $\lambda$-ideal for the linear form $\lambda:=l_1z_1+\dots+l_rz_r$. 
\end{proof}

By the theory of toric varieties, or using  \cite[Theorem 2.1]{EisStu} for the more general case, we  have (recall that $r$ is the length of the tuple of variables $x$):

\begin{theorem}\label{T:EisStu}
If $I\sub\pol kx$ is generated by binomials, then $\pol kx/I$ has dimension $r-\op{rk}(I)$.  Moreover, if $I$ is also prime, the quotient $\pol kx/I$ is the coordinate ring of  a toric variety in $\mathbb A_k^r$, and any coordinate ring of a toric variety is obtained this way.\qed
\end{theorem}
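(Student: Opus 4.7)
My plan for \Thm{T:EisStu} proceeds in three stages. First, I would reduce the dimension formula to the case where $I$ is prime, using the structure theorem for binomial ideals from \cite{EisStu}: every associated prime of a binomial ideal in $\pol kx$ is itself binomial. Granting this, for each minimal prime $\mathfrak p$ of $I$ one has $\Lambda(\mathfrak p)\supseteq \Lambda(I)$, since every binomial generator of $I$ lies in $\mathfrak p$ and its support-difference must lie in $\Lambda(\mathfrak p)$; conversely, among the minimal primes of $I$ there is at least one with $\op{rk}(\Lambda(\mathfrak p)) = \op{rk}(\Lambda(I))$, arising from the saturation of $\Lambda(I)$ (which preserves rank). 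Since $\op{dim}(\pol kx/I)=\max_{\mathfrak p}\op{dim}(\pol kx/\mathfrak p)$ over minimal primes, the formula reduces to the prime case.

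Next, for a prime binomial ideal $P\sub\pol kx$ I would, after shrinking the ambient polynomial ring to eliminate any variable lying in $P$, let $L:=\Lambda(P)$ of rank $s$, and replace $L$ by its saturation (unchanged in rank) so that $\zet^r/L$ is free of rank $r-s$. Choose a projection $\pi\colon\zet^r\onto\zet^{r-s}$ with kernel $L$, and define a $k$-algebra \homo\
$$
\phi\colon\pol kx\to\pol k{t_1,\dots,t_{r-s},\inv{t_1},\dots,\inv{t_{r-s}}}
$$
by $x_i\mapsto\mu_i\,t^{\pi(\tuple e_i)}$, where the scalars $\mu_i\in k^\times$ are chosen so that each binomial generator of $P$ maps to zero. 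Over the algebraically closed field $k$ such scalars always exist, since their existence amounts to extending a partial character of $L$, a problem that only involves solving divisible equations. By construction $P\sub\ker\phi$. The image of $\phi$ is the subalgebra of the Laurent ring generated by the monomials $\mu_i t^{\pi(\tuple e_i)}$; since the $\pi(\tuple e_i)$ generate $\zet^{r-s}$, the fraction field of this image equals $k(t_1,\dots,t_{r-s})$, so the image has Krull dimension $r-s$. On the other hand, $\pol kx/P$ has transcendence degree at most $r-s$ over $k$, because the rank-$s$ lattice $L$ produces $s$ independent multiplicative relations among the images of the $x_i$ in the fraction field of $\pol kx/P$. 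Since $P\sub\ker\phi$ with both prime, the inequality $\op{dim}(\pol kx/\ker\phi)=r-s\leq \op{dim}(\pol kx/P)\leq r-s$ forces $\op{dim}(\pol kx/P)=r-s$ and $P=\ker\phi$.

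For the toric statement, the image $\phi(\pol kx/P)$ is the affine semigroup algebra generated by the Laurent monomials $\mu_it^{\pi(\tuple e_i)}$, hence coincides with the coordinate ring of the Zariski closure in $\mathbb A_k^r$ of the image of the monomial map $(k^\times)^{r-s}\to\mathbb A_k^r$, $t\mapsto(\mu_1t^{\pi(\tuple e_1)},\dots,\mu_rt^{\pi(\tuple e_r)})$. Such a closure is, by definition, a toric subvariety of $\mathbb A_k^r$. Conversely, every affine toric subvariety of $\mathbb A_k^r$ arises in this way, and the kernel of the corresponding monomial map is easily seen, by a direct elimination-theoretic argument, to be a prime ideal generated by binomials.

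The principal obstacle in this plan is the first stage: that every associated prime of a binomial ideal is itself binomial. I would invoke this result from \cite{EisStu} rather than reprove it; their argument proceeds by induction on $r$, combined with a cellular decomposition of $\pol kx/I$ indexed by subsets of the variables, and a detailed analysis of how partial characters $\chi\colon L\to k^\times$ on sublattices $L\sub\zet^r$ interact with monomial saturations of the ideal.
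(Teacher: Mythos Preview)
The paper does not prove this theorem at all: it is stated with a trailing \qed\ and introduced by the sentence ``By the theory of toric varieties, or using \cite[Theorem 2.1]{EisStu} for the more general case, we have\ldots''. In other words, the author simply cites the result from Eisenbud--Sturmfels (and classical toric geometry) and moves on. Your proposal therefore goes well beyond what the paper does; there is nothing to compare against, and any correct sketch would already exceed the paper's treatment.

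That said, a brief comment on your outline. The second and third stages (the prime case and the toric identification) are essentially the standard argument and are fine. The first stage, however, has a soft spot. You assert that among the minimal primes of a binomial ideal $I$ there is one with $\op{rk}(\Lambda(\mathfrak p))=\op{rk}(\Lambda(I))$, ``arising from the saturation of $\Lambda(I)$''. This needs more care: minimal primes of binomial ideals may well be \emph{monomial} (e.g.\ $I=(x_1x_2-x_1x_3)$ has $(x_1)$ as a minimal prime), and for such primes $\Lambda(\mathfrak p)$ does not obviously dominate $\Lambda(I)$ under the paper's convention that $\Lambda(\text{monomial})=0$. The actual argument in \cite{EisStu} handles this via the cellular decomposition you mention, separating out the variables that vanish; your reduction implicitly assumes this is already done. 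Since you are citing \cite{EisStu} for the associated-prime structure anyway, the cleanest fix is simply to cite \cite[Theorem 2.1]{EisStu} directly for the dimension formula as well---which is exactly what the paper does.
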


By a \emph{toric singularity} (of embedding dimension $r$), we mean the local ring of a (singular) point on a toric variety (in $\mathbb A_k^r$), and the completion of a such a ring is then called an \emph{analytic toric singularity}.  Recall that the normalization of a toric variety is again a toric variety which is in addition \CM\ (\cite{HoToric}). Since the normalization is a finite extension, any analytic toric singularity therefore admits a small MCM. Immediately from \Thm{T:EisStu} and \Lem{L:rkpg} we see that an   analytic toric singularity is pseudo-graded.

\begin{theorem}\label{T:singtoric}
Let $T$ be  analytic toric singularity, or more generally, a pseudo-graded complete local domain, and let $V\sub\op{Spec}T$ be its singular (respectively, non-\CM, non-Gorenstein, non-normal) locus. If $\pr$ defines a three-dimensional  irreducible component of $V$, then $T/\pr$ admits a small MCM. 
\end{theorem}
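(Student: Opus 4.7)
The plan is to reduce the statement to a direct application of \Thm{T:main}, using the machinery already developed. Since the paragraph preceding the theorem establishes that any analytic toric singularity is pseudo-graded, it suffices to treat the more general case where $T$ is a pseudo-graded complete local domain (of positive characteristic, as is in force throughout this section).

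First, I would produce a Hasse-Schmidt derivation on $T$: by \Prop{P:binomintdeltaid}, the linear form $\lambda$ witnessing the pseudo-graded structure gives rise to a Hasse-Schmidt derivation $\mathbf H_\lambda=(1,\Delta_\lambda,H_2,\dots)$ on $T$ whose first term $\Delta_\lambda$ is F-invariant. Next, because $\pr$ is by hypothesis a minimal prime of the radical ideal defining the singular (respectively, non-\CM, non-Gorenstein, non-normal) locus $V$, \Cor{C:HSass}\eqref{i:HSsing} applies to $\pr$ and guarantees that $\pr$ is $\mathbf H_\lambda$-invariant. Consequently, $\mathbf H_\lambda$ descends to a Hasse-Schmidt derivation $\bar{\mathbf H}_\lambda$ on the quotient $T/\pr$.

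The first term $\bar\Delta_\lambda$ of $\bar{\mathbf H}_\lambda$ is a derivation on $T/\pr$, and it remains F-invariant: the identity $\Delta_\lambda^p=\Delta_\lambda$ on $T$ passes verbatim to the quotient, yielding $\bar\Delta_\lambda^p=\bar\Delta_\lambda$. Hence $\bar\Delta_\lambda$ is an F-invariant, integrable derivation on $T/\pr$. Since $\pr$ is prime and $T$ is a complete local domain of positive characteristic, so is $T/\pr$, and by hypothesis it has dimension three. Thus \Thm{T:main} applies and produces a small MCM over $T/\pr$.

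In short, no new ideas are needed; the proof is an assembly of three earlier results. The only genuinely non-trivial ingredient is \Cor{C:HSass}\eqref{i:HSsing}, which is precisely what ensures that the differential structure on the ambient ring $T$ restricts to the chosen irreducible component of the bad locus. This was already the main obstacle, and it has been handled upstream.
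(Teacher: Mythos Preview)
Your proof is correct and follows essentially the same route as the paper: invoke \Prop{P:binomintdeltaid} to obtain the Hasse--Schmidt derivation with F-invariant first term on $T$, use \Cor{C:HSass}\eqref{i:HSsing} to descend it to $T/\pr$, and conclude with \Thm{T:main}. The additional details you spell out (F-invariance passing to the quotient, $T/\pr$ being a complete local domain) are left implicit in the paper but are of course needed, so your version is, if anything, slightly more complete.
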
 
\begin{proof}
By \Prop{P:binomintdeltaid}, there exists a Hasse-Schmidt derivation $\mathbf H=(H_i)_i$ on $T$ with $H_1^p=H_1$. By \Cor{C:HSass}\eqref{i:HSsing}, this $\mathbf H$   descends to a Hasse-Schmidt derivation on $T/\pr$ and so we are done by  \Thm{T:main}. 
\end{proof}

\begin{proposition}\label{P:fewnom}
Let $T$ be  a $d$-dimensional analytic toric singularity. If  $I\sub T$ is an ideal of rank strictly less than $d$, then $T/I$ is pseudo-graded. 
\end{proposition}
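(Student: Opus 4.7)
The plan is to reduce everything to \Lem{L:rkpg} by exhibiting $T/I$ as a quotient $S/K$ of the formal power series ring by an ideal of rank strictly less than $r$. First, I would present $T$ itself as $T = S/J$, with $J \sub S = \pow k{x_1,\dots,x_r}$ generated by binomials; this is possible for an analytic toric singularity via \Thm{T:EisStu} (placing the relevant singular point at the torus-fixed origin, as in the discussion just before \Thm{T:singtoric}). The dimension formula in \Thm{T:EisStu} then forces $\op{rk}(J) = r - d$.

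By the definition of the rank of an ideal in a quotient ring given in the excerpt, I would next select a lifting $I' \sub S$ of the ideal $I \sub T$ --- so that $I = (I' + J)/J$ --- achieving $\op{rk}_S(I') = \op{rk}_T(I) < d$; such a minimum-attaining lifting exists by definition (the rank takes values in $\nat$). This yields a presentation $T/I \iso S/(J + I')$ to which \Lem{L:rkpg} can be applied, provided we can control the rank of the sum.

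For the final step, I would invoke the additivity of the lattice on sums of ideals noted in the paper, namely $\Lambda(J + I') = \Lambda(J) + \Lambda(I')$, to estimate
\[
\op{rk}(J + I') \leq \op{rk}(J) + \op{rk}(I') = (r - d) + \op{rk}_T(I) < (r - d) + d = r,
\]
and then apply \Lem{L:rkpg} to conclude that $T/I$ is pseudo-graded.

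There is no serious obstacle here: the only mildly delicate point is the initial binomial presentation of the analytic toric singularity --- one must know that the relevant singular point can be placed at the origin so that the defining ideal stays binomial after passing to the formal power series completion --- and this is already implicit in the earlier observation that an analytic toric singularity is pseudo-graded. Everything after that reduces to the rank inequality above.
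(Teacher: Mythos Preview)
Your proposal is correct and follows essentially the same route as the paper's own proof: lift $I$ minimally to $S$, use \Thm{T:EisStu} to get $\op{rk}(J)=r-d$, combine via $\Lambda(I+J)=\Lambda(I)+\Lambda(J)$ to bound $\op{rk}(I+J)\leq (r-d)+(d-1)<r$, and finish with \Lem{L:rkpg}. The only difference is cosmetic---the paper reuses the symbol $I$ for the lift and is terser about the binomial presentation of $T$.
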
 
\begin{proof}
Writing again $I$ for a lift to $S:=\pow kx$ of minimal rank, we must show that $I+J$ is a $\lambda$-ideal for some linear form $\lambda$, where $J$ is the  ideal generated by binomials such that  $T=\pow kx/J$. By \Thm{T:EisStu}, the  rank of $J$ is $r-d$. Since $\Lambda(I+J)=\Lambda(I)+\Lambda(J)$ and $\Lambda(I)$ has rank at most $d-1$ by assumption,  the rank of $\Lambda(I+J)$ is at most $r-1$, so that we are done by \Lem{L:rkpg}.
\end{proof} 

\begin{corollary}\label{C:torcyl}
Let $T$ be a $d$-dimensional analytic toric singularity,  and let $C\sub\op{Spec}(S)$ be a cylinder with base inside a $(d-1)$-dimensional coordinate hyperplane. Then the coordinate ring of any three-dimensional irreducible component of $C\cap \op{Spec}(T)$ admits a small MCM.
\end{corollary}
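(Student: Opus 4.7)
The plan is to recognize the coordinate ring of $C\cap \op{Spec}(T)$ as a pseudo-graded ring via \Prop{P:fewnom}, so that \Thm{T:psgr3} applied to any three-dimensional component concludes the argument.

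First I would unwind the hypothesis. The statement that $C$ is a cylinder whose base sits inside a $(d-1)$-dimensional coordinate subspace of $\op{Spec}(S)=\mathbb A_k^r$ means, after permuting the variables $x_1,\dots,x_r$, that the defining ideal $I_C\sub S$ is generated by power series $f_1,\dots,f_s$ involving only $x_1,\dots,x_{d-1}$. Consequently each support $\op{supp}(f_j)$ is contained in $\nat^{d-1}\times\{0\}^{r-d+1}$, so the associated lattice $\Lambda(f_j)$ lies in the $(d-1)$-dimensional sublattice $\zet^{d-1}\times\{0\}^{r-d+1}$ of $\zet^r$. By the additivity recorded in \eqref{eq:Lfg}, the same containment holds for $\Lambda(I_C)$, and hence $\op{rk}(I_C)\le d-1<d$.

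Write $\bar I_C:=I_C T$ for the image of $I_C$ in $T$. Its rank, defined as the infimum of the ranks of its liftings to $S$, is at most $\op{rk}(I_C)<d$, so \Prop{P:fewnom} applies and shows that $T/\bar I_C$ is pseudo-graded. The coordinate ring of $C\cap \op{Spec}(T)$ is precisely $T/\bar I_C$, and any irreducible component of $C\cap\op{Spec}(T)$ is of the form $T/\pr$ for some minimal (hence associated) prime $\pr$ of $\bar I_C$; such a quotient $T/\pr$ is itself a component of the pseudo-graded ring $T/\bar I_C$. If this component is three-dimensional, \Thm{T:psgr3} then furnishes a small MCM over $T/\pr$.

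I do not foresee any serious obstacle: the argument is bookkeeping with ranks combined with \Prop{P:fewnom} and \Thm{T:psgr3}. The one point requiring some care is the reading of ``$(d-1)$-dimensional coordinate hyperplane'' as a coordinate subspace of dimension $d-1$ (rather than codimension one); this is exactly what yields the bound $\op{rk}(I_C)<d$ that is needed to invoke \Prop{P:fewnom}.
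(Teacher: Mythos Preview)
Your proposal is correct and follows essentially the same route as the paper: bound $\op{rk}(I_C)\le d-1$ using that the defining equations of the cylinder only involve $d-1$ of the variables, invoke \Prop{P:fewnom} to conclude that $T/I_CT$ is pseudo-graded, and then apply \Thm{T:psgr3} to any three-dimensional component. Your explicit verification that $\Lambda(I_C)\sub\zet^{d-1}\times\{0\}^{r-d+1}$ and your remark on the intended reading of ``$(d-1)$-dimensional coordinate hyperplane'' are both on point and mirror the paper's (terser) argument.
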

\begin{proof}
By assumption, there exists a subset of the $x$-variables of size $d-1$ defining the ideal $I\sub T$ of $C$. In particular, $\op{rk}(I)\leq d-1$ and so $T/I$, the coordinate ring of $C\cap \op{Spec}(T)$  is pseudo-graded by \Prop{P:fewnom}, and the result now follows from \Thm{T:psgr3}. 
\end{proof}

By an \emph{$m$-nomial} in a quotient of $S$ or $\pol kx$, we mean the image $f$ of an element whose support has cardinality $m$. Note that then $\op{rk}(f)<m$, and any element satisfying the latter inequality is called a \emph{pseudo-$m$-nomial}.
For instance, the trinomial
$$
f:=u_0x^2z^4+u_1xy^2z^2+u_2y^4
$$
is in fact a pseudo-binomial as $\Lambda(f)$ is generated by $(1,-2,2)$. Let us call a \homo\ of complete local rings with residue field $k$  \emph{(pseudo-)$m$-nomial}, if it is given by (pseudo-)$m$-nomials, that is to say, induced by a \homo\ $\pow ky\to\pow kz\colon y_i\mapsto f_i$, where each $f_i$ is a (pseudo-)$m$-nomial. 

\begin{corollary}\label{C:invhomotor}
Let $T$ be a $d$-dimensional analytic toric singularity, $I\sub S$ a monomial  ideal, and $S\to T$ a pseudo-$m$-nomial \homo. If $r(m-1)<d$, then $T/IT$ is pseudo-graded. In particular, if $\pr$ is  a three-dimensional associated prime of $IT$, then $T/\pr$ admits a small MCM.
\end{corollary}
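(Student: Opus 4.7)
The plan is to realize $T/IT$ explicitly as a quotient of a power series ring by an ideal of sub-maximal rank, then invoke \Lem{L:rkpg} and \Thm{T:psgr3}. First, since $T$ is a $d$-dimensional analytic toric singularity, I would write $T = \pow kz/J$ with $z=(z_1,\dots,z_N)$ a tuple of variables of length equal to the embedding dimension $N$ of $T$ and $J\sub\pow kz$ a prime binomial ideal; by \Thm{T:EisStu}, $\op{rk}(J)=N-d$. By definition of a pseudo-$m$-nomial homomorphism, the map $S\to T$ lifts to $\pow kx\to\pow kz\colon x_i\mapsto f_i$ with each $f_i\in\pow kz$ a pseudo-$m$-nomial, so $\op{rk}(f_i)\leq m-1$.

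Next I would lift the monomial ideal $IT$ to $\pow kz$. Fix monomial generators $x^{\tuple a_1},\dots,x^{\tuple a_s}$ of $I$ and consider the products $f^{\tuple a_j}:=f_1^{a_{j,1}}\cdots f_r^{a_{j,r}}\in\pow kz$, whose images in $T$ generate $IT$. Hence $T/IT \iso \pow kz/(J+\tilde I)$ with $\tilde I:=(f^{\tuple a_1},\dots,f^{\tuple a_s})\pow kz$. Applying \eqref{eq:Lfg} iteratively gives $\Lambda(f^{\tuple a_j})\sub \Lambda(f_1)+\dots+\Lambda(f_r)$, a sublattice of $\zet^N$ of rank at most $r(m-1)$. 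Summing over $j$ yields $\op{rk}(\tilde I)\leq r(m-1)$, and combined with $\Lambda(J+\tilde I)=\Lambda(J)+\Lambda(\tilde I)$ as noted just before \Lem{L:rkpg}, one deduces
\[
\op{rk}(J+\tilde I) \leq (N-d) + r(m-1) < N
\]
using the hypothesis $r(m-1)<d$. Thus $T/IT$ is pseudo-graded by \Lem{L:rkpg}.

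For the final claim, any three-dimensional associated prime $\pr$ of $IT$ in $T$ corresponds to an associated prime of $(0)$ in the pseudo-graded ring $T/IT$, so $T/\pr\iso (T/IT)/(\pr/IT)$ is a three-dimensional component of the pseudo-graded ring $T/IT$, and \Thm{T:psgr3} then produces a small MCM over $T/\pr$.

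There is no genuine obstacle beyond careful bookkeeping: the substantive ingredients are the rank formula \Thm{T:EisStu} giving $\op{rk}(J)=N-d$, the sub-additivity \eqref{eq:Lfg} propagating the pseudo-$m$-nomial bound through the multiplicative structure of $IT$, and the arithmetic inequality in which the hypothesis $r(m-1)<d$ is consumed. The only mildly delicate point is to remember to multiply out the monomial generators of $I$ in the ambient $\pow kz$ (not in $T$) before measuring rank, since the rank of an ideal of $T$ is defined as the minimum over all liftings, and the bound $\op{rk}(f^{\tuple a_j}) \leq r(m-1)$ uses the specific pseudo-$m$-nomial lifts $f_i$ furnished by the hypothesis.
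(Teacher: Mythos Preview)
Your argument is correct and follows essentially the same route as the paper: you bound $\Lambda(\tilde I)\subseteq\Lambda(f_1)+\dots+\Lambda(f_r)$ via \eqref{eq:Lfg}, combine with $\op{rk}(J)=N-d$, and invoke \Lem{L:rkpg} and \Thm{T:psgr3}. The only cosmetic difference is that the paper packages the step ``$\op{rk}(J+\tilde I)<N$'' as an appeal to \Prop{P:fewnom} (applied with $\op{rk}(IT)\leq r(m-1)<d$), whereas you have inlined that proposition's proof; the underlying computation is identical.
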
  
\begin{proof}
Write $T$ as a quotient of some $\pow kz$, with $z=\rij zs$, and let $S\to T$ be given by sending $x_i$ to the pseudo-$m$-nomial $f_i\in\pow kz$, for $i=\range 1r$. By assumption, $\Lambda_i:=\Lambda(f_i)\sub\zet^s$ has rank at most $m-1$. Let $B:=\pol{\pow kz}{\inv {z_1},\dots,\inv {z_s}}$ and  write $f_i=z^{\tuple a_i}g_i$, for some $\tuple a_i\in\zet^s$ and  some $g_i\in B$ with support in $\Lambda_i$. Let $\sigma\colon \zet^r\to\zet^s$ be the linear map given by $\rij br\mapsto b_1\tuple a_1+\dots+b_r \tuple a_r$. 
It follows that the image of a monomial $f^{\tuple b}$ in $T$, with $\tuple b\in\zet^r$,   is equal to  $z^{\sigma(\tuple b)}g_{\tuple b}$, for  some $g_{\tuple b}\in B$  with $\op{Supp}(g_{\tuple b})\sub \Lambda:=\Lambda_1+\dots+\Lambda_r$. 
%
%
%
This shows that $\Lambda(IT)\sub\Lambda$ and since the latter has rank at most $r(m-1)$,  we are done by \Prop{P:fewnom} and    \Thm{T:psgr3}.
%
%
%
\end{proof}
\begin{remark}\label{R:invhomotor}
More generally, the result still holds if $I\sub S$ is an arbitrary ideal such that  $\op{rk}(I)<d-r(m-1)$. Indeed, $I$ is then generated by elements of the form $h=x^{\tuple b}\sum u_{\tuple c}x^{\tuple c}$, with $\tuple c\in \Theta:=\Lambda(I)$, $u_{\tuple c}\in k$ and $\tuple b\in \zet^r$. The image of $h$ in $T$ is equal to 
$
z^{\sigma(\tuple b)}g_{\tuple b}\sum z^{\sigma(\tuple c)}g_{\tuple c}
$
and so its  support is inside $\sigma(\tuple b)+\sigma(\Theta)+\Lambda$, showing that $\Lambda(IT)\sub\sigma(\Theta)+\Lambda$. As $\op{rk}(\sigma(\Theta))\leq\op{rk}(\Theta)=\op{rk}(I)$, we get $\op{rk}(IT)<d$.
\end{remark}  

\begin{corollary}\label{C:tor4quad}
Let $T$ be a $d$-dimensional  analytic toric singularity. If $\pr\sub T$ is  a three-dimensional prime, then $T/\pr$ admits a small MCM in the two following cases:
\begin{enumerate}
\item $d=4$ and $\pr$ contains a non-zero (pseudo-)quadrinomial;
\item $d=5$ and $\pr$ contains a height two ideal generated by two (pseudo-)trinomials. 
\end{enumerate}
\end{corollary}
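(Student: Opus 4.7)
The plan is to reduce both cases to \Thm{T:psgr3} via \Prop{P:fewnom}: in each case, produce an ideal $I\sub\pr$ with $\op{rk}(I)<d$, use \Prop{P:fewnom} to conclude that $T/I$ is pseudo-graded, and then verify that $\pr$ is a minimal prime of $I$, so that $T/\pr$ qualifies as a three-dimensional component of $T/I$. Since $T$ is a complete local domain (analytic toric singularities are integral), it is catenary and equidimensional, so $\op{ht}(\pr)+\dim(T/\pr)=d$, giving $\op{ht}(\pr)=d-3$ in both cases.

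For case~(i), let $f\in\pr$ be the given nonzero pseudo-quadrinomial and set $I:=fT$. A quadrinomial has support of cardinality four, so every pseudo-quadrinomial has $\op{rk}(f)\leq 3$ by definition, whence $\op{rk}(I)\leq 3<4=d$; \Prop{P:fewnom} then makes $T/I$ pseudo-graded. In the four-dimensional domain $T$, the principal ideal $(f)$ has height one (Krull), matching $\op{ht}(\pr)=1$, so $\pr$ is a minimal prime of $I$. Minimal primes are associated, and hence $T/\pr$ is a three-dimensional component of $T/I$; \Thm{T:psgr3} supplies the small MCM.

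For case~(ii), take $I=(f_1,f_2)\sub\pr$ as given, with $\op{ht}(I)=2$ and each $f_i$ a pseudo-trinomial. Then $\op{rk}(f_i)\leq 2$, and by definition $\Lambda(I)=\Lambda(f_1)+\Lambda(f_2)$, so subadditivity of rank yields $\op{rk}(I)\leq 4<5=d$; \Prop{P:fewnom} again makes $T/I$ pseudo-graded. Here $\op{ht}(I)=2=d-3=\op{ht}(\pr)$, so $\pr$ is minimal, hence associated, over $I$, and \Thm{T:psgr3} concludes.

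The only step requiring any attention is the verification that $\pr$ is a minimal prime of the chosen $I$ in each case, which boils down to the equidimensional height formula available in any complete local domain. Beyond this formal bookkeeping, the proof is a direct invocation of the pseudo-graded machinery already in place.
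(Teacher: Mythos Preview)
Your proof is correct and follows essentially the same route as the paper's. The paper phrases the argument as an application of \Cor{C:invhomotor} (taking $S\to T$ to be the one-variable map $y\mapsto f$ when $d=4$, and the two-variable map $(y_1,y_2)\mapsto(f_1,f_2)$ when $d=5$, with $I$ the ideal of variables), but since \Cor{C:invhomotor} is itself proved by bounding $\op{rk}(IT)$ and invoking \Prop{P:fewnom}, your direct appeal to \Prop{P:fewnom} with the explicit rank bounds $\op{rk}(f)\leq 3$ and $\op{rk}(f_1)+\op{rk}(f_2)\leq 4$ amounts to the same computation; you also make explicit the height/catenary check that $\pr$ is minimal (hence associated) over $I$, which the paper leaves to the reader.
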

\begin{proof}
Apply \Cor{C:invhomotor}, where $I$ is the ideal generated by the variables, and   $S\to T$ is given, in the case $d=4$, by the quadrinomial with $r=1$, and in case $d=5$, by the two trinomials with $r=2$. 
%
%
\end{proof} 
\begin{remark}\label{R:tor4quad}
More generally, if there exists an ideal $I\sub T$ of height $d-3$   generated either by $d-4$ pseudo-binomials and one pseudo-quadrinomial or by $d-5$ pseudo-binomials and two pseudo-trinomials, then any $T/\pr$ admits a small MCM for any minimal prime $\pr$ of $I$. The same is true if instead in the above, the number of pseudo-binomials is arbitrary and we also allow monomial generators instead (as long as the height of the ideal is $d-3$).
\end{remark}

\section{Appendix: hereditary strongly local algebras}
Recall that a non-commutative ring $A$ is called \emph{local} if it has a unique maximal left ideal, which is then also the unique maximal right ideal, and this is then also the Jacobson radical $\op{rad}( A)$ (see, for instance, \cite{LamNon}). It follows that $A$ is local \iff\ $A/\op{rad}(A)$ is a division ring. Therefore,  $A$ is local \iff, for any $f\in A$, either $f$ or $1-f$ is a unit. We call  $A$ \emph{strongly local} if $A/\op{rad}(A)$ is a field, called its \emph{residue field}. Finally, if $R$ is a commutative ring and $A$ an $R$-algebra, then we call   $A$  \emph{hereditary (strongly) local} over   $R$, if any $R$-subalgebra of $A$ is (strongly) local. The following is folklore:

\begin{theorem}\label{T:indloc}
Let $\mathcal E:=\ndomod RQ$ be the endomorphism ring of a   finitely generated $R$-module $Q$ over a Henselian local ring $R$ with algebraically closed residue field $k$. The following are equivalent 
\begin{enumerate}
\addtocounter{enumi}4
\item \label{i:indec} $Q$ is indecomposable;
\item\label{i:idem}  $\mathcal E$ has no non-trivial idempotents;
\item\label{i:loc} $\mathcal E$ is local;
\item\label{i:sloc} $\mathcal E$ is strongly local with residue field $k$;
\item\label{i:hloc} $\mathcal E$ is hereditary strongly local over $R$.
\end{enumerate}
\end{theorem}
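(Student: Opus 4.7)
The plan is to reduce all five conditions to a single core statement: any module-finite $R$-algebra $B$ without non-trivial idempotents is strongly local with residue field $k$. Once this is in hand, every implication falls out, because an $R$-subalgebra of $\mathcal E$ is automatically a finitely generated $R$-module (as $R$ is Noetherian and $\mathcal E\sub Q^b$ is finite over $R$) and inherits the absence of non-trivial idempotents from $\mathcal E$.

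First I would clear away the formal equivalences. The equivalence \equivalence{i:indec}{i:idem} is the standard correspondence between direct summand decompositions $Q=Q_1\oplus Q_2$ and idempotent projections in $\mathcal E$. The implications \implication{i:hloc}{i:sloc}, \implication{i:sloc}{i:loc}, and \implication{i:loc}{i:idem} are formal: take $\mathcal E$ as its own subalgebra, recall the definitions, and use that in a local ring either $e$ or $1-e$ is a unit while an idempotent unit equals $1$.

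The substance lies in \implication{i:idem}{i:hloc}. Let $B\sub\mathcal E$ be any $R$-subalgebra, necessarily module-finite over $R$, and note that any idempotent of $B$ is an idempotent of $\mathcal E$, hence trivial. I would then argue in two steps. Since $R$ is Henselian and $B$ is finite over $R$, idempotents lift from $\bar B:=B/\maxim B$ to $B$, so $\bar B$ is also idempotent-free. Now $\bar B$ is a finite-dimensional $k$-algebra, hence Artinian, so $\bar B/\op{rad}(\bar B)$ is semisimple Artinian; being idempotent-free, Artin--Wedderburn forces it to be a single matrix block $M_n(D)$ with $n=1$, i.e., a division algebra $D$ finite-dimensional over the algebraically closed field $k$, whence $D=k$. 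So $\bar B$ is strongly local with residue field $k$. To transfer locality back to $B$ I would verify $\maxim B\sub \op{rad}(B)$ by restricting to the commutative subalgebra $R[z]\sub B$ for each $z\in\maxim B$, where the usual commutative theory gives that $1-z$ is a unit; Nakayama then lifts units from $\bar B$ to $B$, so that $B$ itself is strongly local with residue field $k$.

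The main obstacle, more technical than conceptual, is the Henselian lifting of idempotents for a non-commutative module-finite $R$-algebra; I would handle it by first lifting $\bar e\in\bar B$ to some $e_0\in B$ arbitrarily, then replacing $e_0$ by a genuine idempotent using that the commutative $R$-subalgebra $R[e_0]\sub B$ is finite over the Henselian ring $R$, so the classical commutative Hensel statement applies to the separable polynomial $T^2-T$ factored modulo $\maxim$ as $T(T-1)$.
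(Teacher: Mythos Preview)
Your argument is correct. The paper's proof is shorter and organized differently: after dismissing the easy implications it proves only \implication{i:indec}{i:hloc}, but rather than analyzing $B/\maxim B$ via Artin--Wedderburn, it reduces to the commutative case \emph{element by element}. For each $f\in B$ it looks at $S:=R[f]$, which is commutative and module-finite over the Henselian ring $R$, hence a finite product of local $k$-algebras; the absence of idempotents forces $S$ itself to be local with residue field $k$, so there is a unique $u\in k$ with $f-u$ in the maximal ideal of $S$, and one concludes $f-u\in\op{rad}(B)$. Your route invokes more non-commutative structure theory (semisimple quotient, Wedderburn, finite division algebras over $k$) and makes the idempotent-lifting step explicit, whereas the paper hides that step inside the structure theorem for commutative finite algebras over a Henselian base. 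Both arguments ultimately rest on the same two ingredients---the Henselian decomposition of commutative module-finite subalgebras, and the Nakayama-type fact that $\mathfrak nB\sub\op{rad}(B)$ for a module-finite algebra $B$ over a commutative local ring with maximal ideal $\mathfrak n$---but you defer the passage to $R[\,\cdot\,]$ to the two technical sub-steps, while the paper uses it as the organizing principle from the outset.
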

\begin{proof}
We only need to show that the first condition implies the last. Let $A\sub\mathcal E$ be an $R$-subalgebra and take some $f\in A$. Let $S\sub A$ be the $R$-subalgebra generated by $f$. Since $S$ is commutative and $R$ is  Henselian, $S$ must be a direct sum $S_1\oplus\dots\oplus S_m$ of local rings, and since $k$ is algebraically closed, they all have residue field $k$. But then $m$ must be equal to $1$ lest we violate \eqref{i:idem}. There is a unique $u\in k$ such that $f-u$ lies in the maximal ideal of $S$, whence in the radical of  $A$, and so $A$ is strongly local with residue field $k$.
\end{proof} 

In the sequel, let $R$ be  a complete local ring of   \ch\ $p$ with algebraic residue field $k$ and let $M$ be a finitely generated $R$-module. 
Although we  did not   use   it explicitly, the following is the underlying reason for introducing differential algebra into the problem: let $\mathcal D(M)\sub\ndomod kM$ denote the subring of  differential operators (=the endomorphisms of finite order)  on $M$ (see \S\ref{s:diff}); Grothendieck showed (see \cite{Yek}) that it consists precisely of the $\powers Rn$-linear endomorphisms, for some $n$, where $\powers Rn\sub R$ is the subring of $p^n$-th powers of elements of $R$. 

\begin{corollary}\label{C:Findloc}
The ring of differential operators $\mathcal D(M)$ is hereditary strongly local \iff\ $M$ is F-indecomposable. 
\end{corollary}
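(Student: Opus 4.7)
\smallskip
\noindent\textbf{Proposal.} The plan is to exploit Grothendieck's description (recalled immediately before the corollary) of $\mathcal D(M)$ as the ascending union $\bigcup_n \mathcal E_n$, where $\mathcal E_n$ consists of the $\powers Rn$-linear endomorphisms of $M$; these are exactly the $R$-linear endomorphisms of $\frob*^n M$, that is, $\mathcal E_n = \ndomod R{\frob*^n M}$. Under this identification, the equivalence we want is \emph{local-to-global} across the tower $\mathcal E_0 \subseteq \mathcal E_1 \subseteq \dots$: each $\mathcal E_n$ is controlled by \Thm{T:indloc} applied to $Q := \frob*^n M$, and the union inherits the corresponding property.

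For the \emph{only if} direction, assume $\mathcal D(M)$ is hereditary strongly local; in particular $\mathcal D(M)$ is local and hence has no non-trivial idempotents. If $M$ were F-decomposable, some $\frob*^n M$ would decompose nontrivially as $P\oplus Q$, and the corresponding projection would be an $R$-linear idempotent on $\frob*^n M$, whence a non-trivial idempotent in $\mathcal E_n \subseteq \mathcal D(M)$, a contradiction.

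For the \emph{if} direction, assume $M$ is F-indecomposable, so $\frob*^n M$ is indecomposable for every $n$. By \Thm{T:indloc}, each $\mathcal E_n$ is hereditary strongly local over $R$ with residue field $k$. Let $A \subseteq \mathcal D(M)$ be any $R$-subalgebra and let $f\in A$. Choose $n$ with $f\in\mathcal E_n$; the $R$-subalgebra $S \subseteq A$ generated by $f$ then sits inside the $R$-subalgebra of $\mathcal E_n$ generated by $f$, and hereditary strong locality of $\mathcal E_n$ forces $S$ to be strongly local with residue field $k$. Mimicking the last line of the proof of \Thm{T:indloc}, there is a unique $u\in k$ with $f-u$ in the maximal ideal of $S$, whence in $\op{rad}(A)$. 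As $f\in A$ was arbitrary, $A/\op{rad}(A)$ is a field equal to $k$, i.e., $A$ is strongly local, proving hereditary strong locality of $\mathcal D(M)$.

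The only genuine subtlety is the transfer of the locality argument from a single $\mathcal E_n$ to the union $\mathcal D(M)$. This is immediate once one notes that the strong locality of $S$ is a purely finite-generation statement: it is witnessed entirely inside the commutative Henselian ring $\mathcal E_n$, so the tower structure causes no complications, and the conclusion on $A$ follows element by element. No further ingredients are needed beyond \Thm{T:indloc} and the cited description of $\mathcal D(M)$.
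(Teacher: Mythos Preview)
Your proposal is correct and follows essentially the same approach as the paper. The paper's version is marginally more streamlined: given an $R$-subalgebra $S\subseteq\mathcal D(M)$, it sets $S_n:=S\cap\ndomod{\powers Rn}M$, observes each $S_n$ is strongly local with residue field $k$ by \Thm{T:indloc}, and then notes that the directed union $S=\bigcup_nS_n$ of strongly local rings with common residue field $k$ is again strongly local. Your element-by-element reduction to the singly generated subalgebra $R[f]\subseteq\mathcal E_n$ is the same idea at a finer granularity, and your final paragraph correctly identifies (and adequately disposes of) the only point requiring care.
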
 
\begin{proof}
Note that $\ndomod {\powers Rn}M\iso \ndomod R{\frob*^nM}$, and so if $\mathcal D(M)$ is local, whence has no non-trivial idempotents,  neither therefore does  the subring $ \ndomod R{\frob*^nM}$, proving that $\frob*^nM$ is indecomposable. Conversely, suppose $M$ is F-indecomposable, and let $S\sub\mathcal D(M)$ be an arbitrary $R$-subalgebra. Since $\frob*^nM$ is by assumption indecomposable, the subalgebra $S_n:=S\cap\ndomod {\powers Rn}M$ is strongly local by \Thm{T:indloc}, for all $n$. It is not hard to see that   $S=\bigcup_n S_n$ is then also strongly local.   
\end{proof}  

Let us say that a   submodule $H\sub\mathcal D$ is   \emph{F-closed} if $h^p\in H$, for all $h\in H$. 

\begin{proposition}\label{P:idemgrdiff}
If there exists a finitely generated F-closed $R$-submodule $H\sub\mathcal D$ containing $R$,  some $f\notin H$, and an $l$ such that $f^{p^l}-f\in H$,     then $M$ is F-decomposable.
\end{proposition}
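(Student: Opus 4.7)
The plan is to prove the contrapositive: assuming $M$ is F-indecomposable, I show that $f$ must lie in $H$, contradicting the hypothesis. By \Cor{C:Findloc}, F-indecomposability makes $\mathcal D(M)$ hereditary strongly local over $R$. Since $H$ is finitely generated and $f$ has finite order, there is $n$ with $H\cup\{f\}\sub\mathcal E_n:=\ndomod R{\frob*^nM}$; this $\mathcal E_n$ is $R$-finite because $\frob*^nM$ is a finitely generated module over the Noetherian ring $R$. Let $A\sub\mathcal E_n$ be the $R$-subalgebra generated by $H\cup\{f\}$: then $A$ is $R$-finite, hence $\maxim$-adically complete, and hereditary strong locality makes it strongly local with residue field $k$. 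Writing $J:=\op{rad}(A)$, the quotient $A/\maxim A$ is Artinian, so $J^N\sub \maxim A$ for some $N$, and the $J$-adic and $\maxim$-adic topologies on $A$ coincide.

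Next I reduce to the case $f\in J$. By Cohen's structure theorem, $R$ contains a copy of $k$, and every differential operator on $M$ is by definition $k$-linear, so elements of $k$ commute with everything in $\mathcal D(M)$. Let $u\in k\sub R\sub H$ be the lift of the image of $f$ in $A/J=k$: since $u$ commutes with $f$, the freshman's dream gives $(f-u)^{p^l}-(f-u)=h-(u^{p^l}-u)$, which lies in $H$ because both $h$ and $u^{p^l}-u$ do. Replacing $f$ by $f-u$ preserves both $f^{p^l}-f\in H$ and $f\notin H$, so I may assume $f\in J$.

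The key step is an Artin--Schreier iteration inside the commutative subring $\mathbb F_p[f]\sub A$, where the freshman's dream applies without restriction. From $f^{p^l}=f+h$, a quick induction (using $(x+y)^{p^l}=x^{p^l}+y^{p^l}$ for commuting $x,y$) yields the telescoping identity
\[
f^{p^{nl}}-f=\sum_{i=0}^{n-1}h^{p^{il}}\qquad(n\geq 1),
\]
every summand of which lies in $H$ by F-closure. Because $f\in J$, we have $f^{p^{nl}}\in J^{p^{nl}}\to 0$ in the $\maxim$-adic topology on $A$, so the partial sums $-\sum_{i=0}^{n-1}h^{p^{il}}=f-f^{p^{nl}}$ converge to $f$ in $A$. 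These partial sums all lie in the finitely generated $R$-submodule $H$, which is $\maxim$-adically closed in $A$ by the Artin--Rees lemma, so the limit $f$ lies in $H$, contradicting $f\notin H$.

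The main obstacle I anticipate is making this iteration work cleanly in the non-commutative algebra $A$: the crucial simplification is that every element entering the recursion is a polynomial in $f$, so it sits inside the commutative subalgebra $\mathbb F_p[f]$, where the freshman's dream applies and the Jacobson-formula commutator corrections that would otherwise appear vanish automatically. The subsidiary technical points---the coincidence of the $J$-adic and $\maxim$-adic topologies on $A$, and the $\maxim$-adic closedness of the finitely generated $R$-submodule $H$---follow from standard Noetherian arguments once $A$ is recognised as $R$-finite over the complete local ring $R$.
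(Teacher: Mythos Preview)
Your proof is correct and follows essentially the same route as the paper: assume F-indecomposability, use \Cor{C:Findloc} to get hereditary strong locality, subtract a scalar to place $f$ in the radical, iterate the Artin--Schreier relation to show $f^{q^n}-f\in H$, and conclude $f\in H$ from the $\maxim$-adic closedness of $H$. Your explicit telescoping formula $f^{p^{nl}}-f=\sum_{i=0}^{n-1}h^{p^{il}}$ and the remark that the whole iteration takes place in the commutative subring $\mathbb F_p[f]$ make the freshman's-dream step more transparent than in the paper (which writes $(f^q-f)^q=f^{2q}-f^q$, evidently intending $f^{q^2}-f^q$); otherwise the arguments are the same.
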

\begin{proof}
Put $q:=p^l$. Towards a contradiction, suppose $M$ is F-indecomposable.   Since $\mathcal D$ is then hereditary strongly local by \Cor{C:Findloc},  we can find a (unique) $u\in k$ such that $g:=f-u$ is not a unit, and since then also $g^q- g\in H$,  we may assume form the start that $f$ is not  a unit. By \Cor{C:Findloc}, we may choose $d$ large enough so that   $f\in A:=\ndomod {\powers Rd}M\sub \mathcal D$ and $H\sub A$. Since $A$ is then a local $R$-algebra, $f$ lies in its Jacobson radical $\mathfrak n:=\op{rad} A$. Since $A$ is finite over $R$, so too is $A/\maxim A$   over $k$, and hence some  power of $\mathfrak n$ lies in $\maxim A$, say, $\mathfrak n^l\sub\maxim A$. It follows that $f^{ln}\in\maxim^nA$, for all $n$. Since $H$ is F-closed and $ f^q-f$ lies  in $H$,   so does   $(f^q-f)^q=f^{2q}-f^q$, whence also $f^{2q}- f$. Continuing this way, we get $f^{lnq}- f\in H$, whence  $f\in H+\maxim^{nq}A$, for all $n$. Since $H$, being finitely generated, is $\maxim$-adically closed in $A$, we actually get $f\in H$, contradiction.
\end{proof}

\Prop{P:FinvDind} is now an immediate consequence of \Prop{P:idemgrdiff}: take  $f$   as in the former's proof and  let $H:=\ndomod RM$, so that $f^p-f\in H$.

\providecommand{\bysame}{\leavevmode\hbox to3em{\hrulefill}\thinspace}
\providecommand{\MR}{\relax\ifhmode\unskip\space\fi MR }
\providecommand{\MRhref}[2]{%
  \href{http://www.ams.org/mathscinet-getitem?mr=#1}{#2}
}
\providecommand{\href}[2]{#2}

\end{document}